\documentclass[11pt,a4paper]{article}
\usepackage{graphicx}
\usepackage{amsmath}
\usepackage{amsmath, amsthm, amscd, amsfonts, amssymb}

\usepackage{subfigure}

\newtheorem{theorem}{Theorem}

\newtheorem{definition}[theorem]{Definition}

\newtheorem{lemma}[theorem]{Lemma}

\newtheorem{remark}[theorem]{Remark}

\newcommand{\eps}{\varepsilon}
\renewcommand{\epsilon}{\eps}
\newcommand{\N}{\mathbb{N}}

\begin{document}

\allowdisplaybreaks

\title{Numerically Computable A Posteriori-Bounds for stochastic Allen-Cahn equation}

\author{Dirk Bl\"omker\footnote{Institut f\"ur Mathematik Universit\"at Augsburg, 86135 Augsburg, Germany,
{\sc e-mail:} dirk.bloemker@math.uni-augsburg.de},
Minoo Kamrani\footnote{Department of Mathematics, Faculty of Sciences,
	Razi University, Kermanshah, Iran}}

\date{\today}

\maketitle

\begin{abstract}
	The aim of this paper is the derivation of an a-posteriori
	error estimate for a numerical method based on an exponential scheme in time 
	and spectral Galerkin methods in space.
	We  obtain analytically a rigorous bound on
	the mean square error conditioned to the calculated data,
	which is numerically computable and uses the given numerical approximation.
	Thus one can check  a-posteriori the error for a given numerical computation without relying on an asymptotic result.
	
	All estimates are only based on the numerical data and the structure of the equation,
	but they do not use any a-priori information of the solution, which makes the approach applicable to equations where global existence of solutions is not known.
	For simplicity of presentation, we develop the method here in a relatively simple situation of
	a stable one-dimensional Allen-Cahn equation with additive forcing.
\end{abstract}


\section{Introduction}

A-posteriori analysis of deterministic PDE (partial differential equations) is a well developed tool.
See for example the book \cite{Ver:13} or the results for Allen-Cahn and related equations \cite{BaMuOr:12,Ba:05,GeMa:14}.
The strength of the method is usually the derivation of error indicators for the
refinement of meshes in adaptive schemes. See \cite{SSST:05} for an example in a stochastic setting.

Also for SPDEs (stochastic PDEs) there are recent results on a-posteriori analysis.
The results of \cite{BuDaWi:11, KaFrYa:13} use a-posteriori estimates in polynomial or Wiener-chaos expansion,
and the results of \cite{YaDuGu:10, YaQiDu:12} show a-posteriori mean square error estimates,
under the assumption that  the whole law of the  numerical approximation is known 
(or at least several moments of it).

In our work we follow a different more path-wise approach.
We measure the error in mean square conditioned on the calculated numerical data.
Given a single realization of the numerical approximation, without using a-priori information on the solution
we show analytic bounds, that can be calculated numerically, and guarantee a-posteriori that the true solution
is close to the given realization of the numerical approximation, which was calculated.

Let us remark that the mean square error of our approximation scheme might diverge (see Jentzen \& Hutzenthaler \cite{HuJe1,HuJe2}).
Thus it is not obvious that our conditional mean square error converges, although we obtain a good
error estimate in our numerical example. 
Moreover, we expect quite a large variation for different numerical realizations, 
which seems to be also visible in our numerical examples.

The general philosophy of a-priori error analysis is to use the true solution,
which is plugged into the numerical scheme to calculate the residual. 
Then using the discrete in time equation given by the numerical scheme,
one can derive a discrete equation for the error,
which has coefficients depending on the true solution.
Using a-priori information of the solution, asymptotic bounds for the error are derived.

In our a-posteriori analysis we use a time-continuous interpolation of the numerical data, which is plugged into the SPDE,
in order to derive bounds on the residual. For the error we obtain a PDE which is continuous in
time and has coefficients depending on the numerical data.
Here we can use now standard a-priori SPDE-type methods to derive error bounds,
that depend only on numerical data and the residual, which can be calculated rigorously from the numercial data.

Although for simplicity of presentation, we use a much simpler equation of Allen-Cahn-type,
our result is motivated by equations where the global existence of solutions is not known,
and thus global a-priori estimates are not available.
Typical examples are the three-dimensional Navier-Stokes equation or a somewhat simpler equation from surface growth \cite{BlRo:15}.
For the latter in \cite{BlNoRo:15, No:17} a-posteriori analysis was used for the deterministic PDEs to prove numerically the regularity of solutions
and thus the global  existence and uniqueness.

Here we focus as a starting point for simplicity on an one-dimensional equation of Allen-Cahn type.
Here even the asymptotic convergence results of numerical schemes are well known
See for example \cite{LPS:14,KLMS:11} or \cite{BJ:16P} for a truncated scheme.
Moreover, there is no problem with existence  and uniqueness of solutions. See for example \cite{PrZa:14}.

For the spatial discretization we use the spectral Galerkin-scheme, which simplifies the analysis.
Moreover, for the time-discretization we use a variant of the exponential scheme introduced by \cite{JeKlWi:11}.
Asymptotically, both exponential discretization schems should be equivalent, but the variant we use is slightly easier to handle in the analysis.

The precise functional analytic set-up and the equation itself is presented in Section \ref{sec:sett}.
In Section \ref{sec:SB} we present analytic results for stochastic terms which we cannot evaluate numerically.
One is the infinite-dimensional remainder of the stochastic convolution at discretization times.
The second one bounds fluctuations in between discretization times. Here we need to analyze an Ornstein-Uhlenbeck bridge-process,
as we now the stochastic convolution at all discretization times.

In the main result we present in Section \ref{sec:Res} analytic error estimates for the residual that depend only on the numerically calculated data,
the initial condition, and the stochastic terms already bounded in Section \ref{sec:SB}. We provide a bound in  moments of the
$L^4$-norm, which is conditioned on the given numerical data.
In  Section \ref{sec:App} we study the conditional mean square error of the approximation in $L^2$-norm,
given the numerical data. Nevertheless, this is a property of the equation and not of the data.
We need to quantify the continuous dependence of solutions on additive perturbations like the stochastic convolution or the residual.
Due to the relatively simple structure of the equation with a stable nonlinearity and a stable linear part,
this is relatively straightforward.

In the final section, we give numerical examples to illustrate the result. 
Here we use a quite poor discretization given that the solution is very rough
and still obtain meaningful error bounds. In more detail we study a finer discretization, 
where we see that the rigorous error estimate bounds the solution well.
One main source of error comes from bounds on terms that appear due to stochastic fluctuations 
between the discretization points
and not by the error  at the discrete times where the approximation is calculated.


\section{Setting}
\label{sec:sett}

The following assumptions and definitions are used throughout the paper.
Consider the following SPDE on the Hilbert-space $H=L^2([0,\pi])$  which is of the type:
\begin{equation}\label{SPDE}
du = [Au + F(u)] dt + dW\; \qquad u(0) = u_\star\;,
\end{equation}
subject to Dirichlet boundary conditions on $[0,\pi]$,
where $A$ is the Laplacian, $W$ some cylindrical $Q$-Wiener process.
Finally, $F$ is the locally-Lipschitz nonlinearity $F(u)=-u^3$.

The Dirichlet Laplacian  $A$ is diagonal w.r.t.\ $e_k(x)=\sqrt{2/\pi}\sin(kx)$, $k\in\mathbb{N}$
and generates an analytic semigroup  $\{e^{tA}\}_{t\geq 0}$ on $H$.
Moreover, it is a contraction semigroup on any $L^p(0,\pi)$.
This follows in $L^2$ as the largest eigenvalue of $A$ is $-1$ and thus $\|e^{tA}\|_{\mathcal{L}(L^2)}\leq e^{-t}$.
In $L^\infty$ it is true by the maximum principle $\|e^{tA}\|_{\mathcal{L}(L^\infty)}\leq 1$.
Then by the Riesz-Thorin theorem for any $L^p$-space we have for $t>0$
\begin{equation}
\label{e:RT}
\|e^{tA}\|_{\mathcal{L}(L^p)}
\leq \|e^{tA}\|_{\mathcal{L}(L^\infty)}^{(p-2)/p} \|e^{tA}\|_{\mathcal{L}(L^2)}^{2/p}
\leq e^{-2t/p } < 1 \;.
\end{equation}
Let us remark that by geometric series $I-e^{tA}$ is an invertible operator in $L^p$ with bounded inverse.

For simplicity we assume that the covariance operator $Q$ is also  diagonal in the Fourier basis $e_k$,
and denote the eigenvalues by $\alpha_k^2$, i.e.\ $Qe_k=\alpha_k^2e_k$.
This is a standard assumption in order to simplify the analysis by working with explicit Fourier series.
We comment later on possible generalizations.

In our numerical examples we consider space-time-white noise of order one that
corresponds to $\alpha_k=1$ for all $k\in\mathbb{N}$, which is a case of  solutions of quite poor regularity  with strong fluctuations.
Nevertheless we could allow for even rougher noise.

We assume that $\sum_{k\in\mathbb{N}}\alpha_k^2k^{-2+\delta}<\infty$ for some $\delta>0$, which guarantees that
the stochastic convolution (or Ornstein-Uhlenbeck process) 
\[
Z(t)=\int_0^t e^{(t-s)A} dW(s) \;,
\]
is continuous both in space and time (cf. \cite{PrZa:14}).

The mild formulation of  (\ref{SPDE}) is defined by the fixed point equation
\begin{equation}
 \label{e:mild}
 u(t) =e^{At}u_\star + \int_{0}^{t}e^{A(t-s)}F(u(s))ds + Z(t)\;.
\end{equation}
Here
the existence and uniqueness of solutions is standard. See for example \cite{PrZa:14}.
Here we just need  for some $p\geq3$ that $u \in C^0([0,T], L^p)$, $\mathbb{P}$-almost sure,
in order to formulate the mild solution in $L^p$ and  apply fixed point theorem to \eqref{e:mild}.
For the rest of the paper we just assume that $u_\star$ is such that
there is a sufficiently smooth unique mild solution.

\subsection{Discretization}
\label{sec:defphi}
Here we define the discretization scheme used throughout the paper.
For the discretization in space we use a spectral Galerkin method. Define $H_N$ as the $N$-dimensional space spanned
by the first $N$ eigenfunctions $e_1,\ldots,e_N$. Moreover, denote the orthogonal projection onto $H_N$ as $P_N$.

For the discretization in time, we use a fixed step-size $h = T/M >0$ and for a fixed realization $\omega$,
using a random number generator, we obtain in principle exact values of
\[
\{ P_N Z(hk) \}_{k\in\N} \;,
\]
defined by
\[
Z_0 = 0\;,
\qquad
Z_{k+1}
=  e^{hA} Z_k + X_{k+1}
=  \sum_{j=1}^{k+1} e^{h(k+1-j)A} P_N Q^{1/2} X_j
\]
with independent and identically distributed $\mathbb{R}^N$-valued Gaussian random variables
\[
X_{k+1} = P_N\int_{hk}^{h(k+1)} e^{(h(k+1)-s)A} dW(s)
\sim \mathcal{N}\Big(0,P_NQ\int_0^h e^{2sA}ds\;P_N\Big)\;.
\]

Given these values $Z_k$ for $Z(hk)$, the numerical method provides a realization of the approximation
\[
\{ u_k \}_{k=0,\ldots,M}  \subset H_N\;,
\]
which is defined recursively as $u_0 = P_N u_\star$ and
\[
u_n = e^{Ah} P_N u_{n-1} + \int_{0}^{h} e^{A(h-s)}ds   F_N(u_{n-1})
+ X_{n}\;.
\]
We can also write this explicitly as
\[
u_n = e^{Anh}P_N u_0+\sum_{k=1}^{n}\int_{(k-1)h}^{kh}e^{A(kh-s)}ds   F_N(u_{k-1})
+ Z_{n}\;.
\]

Moreover, we define the approximation $\varphi:[0,T] \to H_N$ by the linear interpolation of the points $\varphi(hk)=u_k$.

\subsection{Result}

The aim of this paper is to bound the conditional mean-square error given the numerical data, i.e, we want to obtain:
\[
\mathbb{E} [\|u-\varphi\|^2\ | \ \{ X_k \}_{k\in\N} ] =
\mathbb{E} [\|u-\varphi\|^2\ | \ \{ Z_k \}_{k\in\N} ] \text{ is small}\;.
\]
Therefore we do not want to estimate the error in an asymptotic result, but
give a bound can be explicitly calculated for the given realization of the approximation.

In Theorem \ref{thm:main} we present the main analytic result for this statement.
The term ''small'' depends on one hand on the the numerical data $Z$ and $X$, and we will evaluate this part  only numerically.
Thus we can only say that it is small, after we computed it.
On the other hand, we have infinite-dimensional parts and random fluctuations between discretization points,
which we have to bound analytically, as there is no data available.

The general philosophy is to evaluate as much as possible of the error bounds using the numerical data, and only rely
on analytic estimates if no numerical evaluation is possible.
As usual we consider in Section \ref{sec:Res} first the residual defined by:

\begin{definition}
\label{def:res}
For the approximation
$\varphi : [0,T] \to P_N H$ defined in Section \ref{sec:defphi} and
 $t \in [0,T]$ we define the residual
\begin{equation}
 \label{e:defRes}
\mathrm{Res} (t)= \varphi(t) - e^{At}\varphi(0) - \int_{0}^{t}e^{A(t-s)}F(\varphi(s))ds - Z(t).
\end{equation}
\end{definition}

We identify in the residual all parts depending on the numerical data, which we do not estimate at all, but evaluate explicitly
using the numerical data.

At first for the discretization points $t_n=nh$ we have
 \[
 \text{Res}(nh) = - \sum_{k=1}^{n}\int_{(k-1)h}^{kh}e^{A(nh-s)} (F(\varphi(s))-P_NF(u_{k-1}))ds -  Q_N Z(nh)\;.
\]
In  Lemma \ref{lem:Resnh} we estimate the residual at the discretization points $nh$.
As $\varphi((k-1)h+\tau) =  u_{k-1} + \frac{\tau}{h} (u_{k}-u_{k-1}) $ for $\tau\in(0,h)$
we can expand the cubic nonlinearity $F$ and evaluate all the integrals above explicitly.
Only the infinite $Q_N Z$ has to be estimated.

In order to bound the residual at intermediate times we first rewrite it in Lemma \ref {lem:resint}
and we present the main bounds on the residual in Theorem \ref{thm:mainres}.

A crucial term for the Theorem bounding the residual is 
the OU-bridge process that gives bounds on the stochastic convolution between discretization points.
The following Section \ref{sec:SB} provides the stochastic bounds on the infinite dimensional remainder of the OU-process and the OU-bridge process.

In view of the approximation result of Section \ref{sec:App} which is done by a standard a-priori estimate in $L^2$-spaces,
we need the bounds in $L^4$ on the residual, as we rely on the cubic nonlinearity. Moreover, the residual also contains a cubic,
so we need $L^{12}$-bounds of the data. 

As we want to obtain explicitly computable bounds for the Allen-Cahn equation, we have to rely on the special structure
of the equations. Nevertheless the general approach  (especially for the residual) 
can easily be adapted to other equations, and in Section \ref{sec:out}
we give a few comments on possible generalizations.


\section{Stochastic bounds}
\label{sec:SB}

Here we present analytic results for stochastic terms we cannot evaluate explicitly using numerical data.
One is the infinite-dimensional remainder of the stochastic convolution at discretization times.
The other one arises from fluctuations in between discretization times, where we need to analyze an Ornstein-Uhlenbeck bridge-process.

\subsection{OU-process}

For the stochastic term $Q_N Z(nh)$ we cannot use any numerical data to evaluate it.
Moreover it is infinite dimensional. The main result here is Lemma \ref{lemBouQ} below.
First we  need  estimates of a Gaussian in the  $L^4$-norm using the expansion in Fourier-series.
We use the $L^4$-norm, as this is the norm needed in the $L^2$-approximation result.
It should be straightforward to extend this to general $L^p$-spaces or even uniform bounds in space.

\begin{lemma}
Let $\mathcal{Z}\sim\mathcal{N}(0,\mathcal{Q})$ be a Gaussian 
with a covariance-operator $\mathcal{Q}$ on $H$ such that $\text{tr}(\mathcal{Q})<\infty$. 
Denote the eigenvalues and eigenfunctions by
$\mathcal{Q}e_k=a_k^2 e_k$
and suppose that for all $x$ we have $\sum_{k\in\mathbb{N}} a_k^2 e_k^2(x)<\infty$, then
\[
\mathbb{E}\|\mathcal{Z}\|_{L^4}^4 = 3 \sum_{k,\ell} a_k^2 a_\ell^2 \int_0^\pi e_k^2(x)e_\ell^2(x) dx \;.
\]
  \end{lemma}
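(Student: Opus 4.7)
The plan is to represent $\mathcal{Z}$ as its Karhunen--Lo\`eve expansion
$\mathcal{Z}(x) = \sum_{k} a_k \xi_k e_k(x)$
with $\xi_k$ i.i.d.\ standard Gaussians, so that for each fixed $x \in [0,\pi]$ the random variable $\mathcal{Z}(x)$ is centered Gaussian with variance $\sigma^2(x) := \sum_k a_k^2 e_k^2(x)$, which is finite by hypothesis. The convergence of the partial sums $\mathcal{Z}_n(x) = \sum_{k\le n} a_k\xi_k e_k(x)$ in $L^2(\Omega)$ for every $x$ is immediate, and convergence in $L^4(\Omega)$ follows from Gaussianity (moments are equivalent).

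Next I would write $\|\mathcal{Z}\|_{L^4}^4 = \int_0^\pi \mathcal{Z}(x)^4\,dx$ and interchange the expectation with the spatial integral via Tonelli (everything is nonnegative):
\[
\mathbb{E}\|\mathcal{Z}\|_{L^4}^4 = \int_0^\pi \mathbb{E}[\mathcal{Z}(x)^4]\,dx.
\]
For a centered Gaussian with variance $\sigma^2(x)$ the fourth moment equals $3\sigma^4(x)$, hence
\[
\mathbb{E}\|\mathcal{Z}\|_{L^4}^4 = 3\int_0^\pi \sigma^4(x)\,dx = 3\int_0^\pi \Bigl(\sum_k a_k^2 e_k^2(x)\Bigr)^2 dx.
\]
Expanding the square as a double sum and exchanging the (nonnegative) double series with the integral using Tonelli once more yields the claimed identity
\[
\mathbb{E}\|\mathcal{Z}\|_{L^4}^4 = 3\sum_{k,\ell} a_k^2 a_\ell^2 \int_0^\pi e_k^2(x) e_\ell^2(x)\,dx.
\]

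The only delicate point is justifying the identity $\mathbb{E}[\mathcal{Z}(x)^4] = 3\sigma^4(x)$ pointwise in $x$; this needs that the partial sums $\mathcal{Z}_n(x)$ converge to $\mathcal{Z}(x)$ in $L^4(\Omega)$, which is where the assumption $\sum_k a_k^2 e_k^2(x) < \infty$ enters (together with the equivalence of Gaussian moments). Once this pointwise fourth-moment formula is in hand, the remaining interchanges of sums and integrals are routine nonnegative Tonelli applications, so I do not expect a genuine obstacle. The finite trace condition $\mathrm{tr}(\mathcal{Q}) < \infty$ is not strictly needed for the identity itself, only to ensure $\mathcal{Z}$ is a bona fide $H$-valued Gaussian; the calculation only uses the pointwise summability assumption.
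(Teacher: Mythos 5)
Your proof is correct and follows essentially the same route as the paper: Karhunen--Lo\`eve expansion, pointwise Gaussianity of $\mathcal{Z}(x)$ with variance $\sum_k a_k^2 e_k^2(x)$, the fourth-moment identity $\mathbb{E}[\mathcal{Z}(x)^4]=3(\mathbb{E}[\mathcal{Z}(x)^2])^2$, and Tonelli to swap expectation, integral, and the double sum. Your extra remarks on $L^4(\Omega)$-convergence of the partial sums and on the trace condition being inessential for the identity are sound refinements of the same argument.
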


  \begin{proof}
   By the properties of the covariance operator, we can expand
   \[
   \mathcal{Z} = \sum_{{k\in\mathbb{N}}} a_k n_k e_k  
   \]
   for a family $\{n_k\}_{k\in\mathbb{N}}$ of independent standard real-valued Gaussians.
   By assumption, we obtain that for all $x$ the real-valued random variable
   \[\mathcal{Z}(x)=\sum_{{k\in\mathbb{N}}} a_k n_k e_k(x)  \sim \mathcal{N}\Big(0, \sum_{k\in\mathbb{N}} a_k^2 e_k^2(x)\Big)
   \text{ in }\mathbb{R} 
   \]
   and the sequences above converges in $\mathbb{R}$ in mean square with respect to the probability measure.

   Thus we can use the fact that all moments of a centered real-valued Gaussian can be computed using only the second moment,
   to obtain by Tonelli's theorem
   \[
   \mathbb{E}\|\mathcal{Z}\|_{L^4}^4 = \int  \mathbb{E}|\mathcal{Z}(x)|^4 dx = 3\int ( \mathbb{E}|\mathcal{Z}(x)|^2)^2 dx  =  3\int (  \sum_{k} a_k^2 e_k^2(x))^2 dx
   \]
   which implies the claim.
  \end{proof}

Recall that the Fourier-basis $\{e_k\}_{k\in\mathbb{N}}$ with respect to Dirichlet boundary conditions on $[0,\pi]$ is defined by
\[
e_{k}(x)=\sqrt2 \sin(kx) / \sqrt{\pi}.
\]
Here we have for $k=1,2,\ldots$ where $f_k(x)= \sqrt2 \cos(kx) / \sqrt{\pi}$
 \[
 e_{k}^2(x)= \frac2{\pi} \sin^2(kx) = \frac1{\pi} -\frac1{\sqrt{2\pi}}f_{2k}(x) \;.
\]
Thus we have for $k,\ell>0$
\[
 \int_0^{\pi} e_{k}^2(x)e_{\ell}^2(x) dx =   \frac1{\pi} +  \frac1{2\pi} \delta_{k,\ell}\;.
\]
Now we can verify
\[
\begin{split}
 \int_0^{\pi}   \sum_{k,\ell} a_k^2a_\ell^2 e_k^2(x) e^2_\ell(x)  dx
 & = \frac1{\pi} \Big( \sum_{k}a_k^2\Big)^2 + \frac1{2\pi} \sum_{k}a_k^4 \\
  & \leq \frac3{2\pi} \Big( \sum_{k}a_k^2\Big)^2 \;.
\end{split}
\]
This yields the following lemma:
\begin{lemma}
\label{lem:OUL4}
Let $\mathcal{Z}\sim\mathcal{N}(0,\mathcal{Q})$ be a Gaussian on $H$ with a covariance-operator $\mathcal{Q}$ such that $\text{tr}(\mathcal{Q})<\infty$.
Let $\mathcal{Q}$ be diagonal w.r.t.\ the Fourier-basis $e_k$, then
\[
\mathbb{E}\|\mathcal{Z}\|_{L^4}^4 \leq  \frac3{2\pi} (\text{tr}(\mathcal{Q}))^2 = \frac3{2\pi} (\mathbb{E}\|\mathcal{Z}\|_{L^2}^2)^2\;.
\]
  \end{lemma}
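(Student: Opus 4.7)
The plan is to feed the Dirichlet sine basis into the identity established in the preceding lemma, namely
\[
\mathbb{E}\|\mathcal{Z}\|_{L^4}^4 = 3 \sum_{k,\ell} a_k^2 a_\ell^2 \int_0^\pi e_k^2(x)e_\ell^2(x)\,dx,
\]
and then bound the resulting double sum in terms of $\mathrm{tr}(\mathcal{Q}) = \sum_k a_k^2$. The first step is an elementary trigonometric reduction: using $\sin^2(kx) = \tfrac12(1-\cos(2kx))$ together with the normalized cosine basis $f_m(x) = \sqrt{2/\pi}\cos(mx)$, I would write $e_k^2(x) = \tfrac{1}{\pi} - \tfrac{1}{\sqrt{2\pi}}\,f_{2k}(x)$. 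This rewrites $e_k^2$ as a constant term plus a single normalized cosine mode, which is exactly the form one wants for integrating against another $e_\ell^2$.

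Second, I would expand $\int_0^\pi e_k^2 e_\ell^2$ using this decomposition. Two of the four cross products vanish because $\int_0^\pi f_{2m}\,dx = 0$ for $m\geq1$, and the remaining $f_{2k}f_{2\ell}$ term gives $\tfrac{1}{2\pi}\delta_{k,\ell}$ by orthonormality of the cosine basis on $[0,\pi]$. This yields the clean formula
\[
\int_0^\pi e_k^2(x)e_\ell^2(x)\,dx = \frac{1}{\pi} + \frac{1}{2\pi}\delta_{k,\ell}.
\]

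Third, I would substitute this into the double sum and separate the two contributions to obtain
\[
\sum_{k,\ell} a_k^2 a_\ell^2 \int_0^\pi e_k^2 e_\ell^2\,dx = \frac{1}{\pi}\Bigl(\sum_k a_k^2\Bigr)^2 + \frac{1}{2\pi}\sum_k a_k^4.
\]
The trivial bound $\sum_k a_k^4 \leq (\sum_k a_k^2)^2$ (expand the square and drop non-diagonal terms) collapses both summands into a single constant times $(\mathrm{tr}(\mathcal{Q}))^2$, and together with the prefactor $3$ this gives the claimed $\tfrac{3}{2\pi}$. The identification $\mathrm{tr}(\mathcal{Q}) = \mathbb{E}\|\mathcal{Z}\|_{L^2}^2$ is immediate from Parseval's identity applied to the Karhunen--Lo\`eve expansion of $\mathcal{Z}$.

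I do not see a genuine obstacle here; the only places requiring mild care are the interchange of summation and integration in the double sum (handled already by Tonelli under the trace-class hypothesis used in the preceding lemma) and the book-keeping that $f_{2k}$ has zero mean on $[0,\pi]$, which is why the cross terms truly vanish. The whole argument is essentially a Fourier arithmetic exercise built on top of the preceding moment identity.
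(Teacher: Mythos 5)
Your argument is correct and is essentially identical to the paper's own derivation: the same decomposition $e_k^2 = \tfrac1\pi - \tfrac1{\sqrt{2\pi}}f_{2k}$, the same evaluation $\int_0^\pi e_k^2 e_\ell^2\,dx = \tfrac1\pi + \tfrac1{2\pi}\delta_{k,\ell}$, and the same final bound $\sum_k a_k^4 \leq \bigl(\sum_k a_k^2\bigr)^2$ combined with the moment identity of the preceding lemma. Nothing is missing.
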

 We finally obtain for our infinite dimensional OU-process $Z$ from the numerical approximation:
\begin{lemma}\label{lemBouQ}
For all $N\in\mathbb{N}$
	the sequence $\{Q_N Z(kh)\}_{k=1,\ldots, M}$ is independent of $(Z_k)_{k\in\mathbb{N}}$
	and bounded in the $L^4$-norm  by
	\[
	 \sup_{t \geq 0} \mathbb{E} \|Q_NZ(t)\|_{L^4}^4 \leq \frac{3}{8\pi} \Big(\sum_{k>N} \alpha_k^2k^{-2}\Big)^2.
	\]
\end{lemma}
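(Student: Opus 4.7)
The plan is to split the statement into the independence claim and the $L^4$-bound, and to reduce the $L^4$-bound to Lemma \ref{lem:OUL4} by computing the trace of the covariance of $Q_N Z(t)$ in the Fourier basis.

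For independence, I would exploit the diagonal structure of $Q$ in the basis $\{e_k\}$. Writing $W$ as $W(t)=\sum_{k\in\mathbb{N}} \alpha_k\beta_k(t)e_k$ for a family of independent real-valued Brownian motions $\beta_k$, the stochastic convolution decomposes as
\[
Z(t)=\sum_{k\in\mathbb{N}} \alpha_k\Bigl(\int_0^t e^{-k^2(t-s)}d\beta_k(s)\Bigr)e_k,
\]
so that $P_N Z(t)$ involves only $\beta_1,\dots,\beta_N$ while $Q_N Z(t)$ involves only $\beta_{N+1},\beta_{N+2},\dots$. Since the increments $X_j$, and hence the whole sequence $(Z_k)_{k\in\mathbb{N}}$, are measurable with respect to the $\sigma$-algebra generated by $\{\beta_1,\dots,\beta_N\}$, independence follows at once.

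For the moment bound, I would note that $Q_N Z(t)$ is a centered Gaussian in $H$ with covariance operator diagonal in $\{e_k\}$ and eigenvalues
\[
\alpha_k^2\int_0^t e^{-2k^2 s}\,ds = \alpha_k^2\,\frac{1-e^{-2k^2 t}}{2k^2}\qquad (k>N),
\]
and zero for $k\leq N$. Hence its trace satisfies
\[
\mathbb{E}\|Q_N Z(t)\|_{L^2}^2 = \sum_{k>N}\alpha_k^2\,\frac{1-e^{-2k^2 t}}{2k^2} \leq \tfrac12\sum_{k>N}\alpha_k^2 k^{-2}
\]
uniformly in $t\geq 0$. Since this covariance operator is finite-trace (using the assumption $\sum\alpha_k^2k^{-2+\delta}<\infty$) and diagonal in the Fourier basis, Lemma \ref{lem:OUL4} applies directly and gives
\[
\mathbb{E}\|Q_NZ(t)\|_{L^4}^4\leq \frac{3}{2\pi}\bigl(\mathbb{E}\|Q_NZ(t)\|_{L^2}^2\bigr)^2 \leq \frac{3}{2\pi}\cdot\frac14\Bigl(\sum_{k>N}\alpha_k^2 k^{-2}\Bigr)^2,
\]
which is the claimed bound after taking the supremum in $t$.

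There is no real obstacle here: the only mild subtlety is noting that the covariance of $Q_N Z(t)$ is actually diagonal in the Fourier basis (so Lemma \ref{lem:OUL4} is applicable without further work) and that the factor $(1-e^{-2k^2 t})/(2k^2)$ is monotone in $t$, giving a uniform-in-$t$ bound. Everything else is bookkeeping with Gaussian second moments.
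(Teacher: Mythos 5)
Your proof is correct and follows essentially the same route as the paper: apply Lemma \ref{lem:OUL4} to the Gaussian $Q_N Z(t)\sim\mathcal{N}\big(0,\,Q_NQ\int_0^te^{2sA}ds\big)$ and bound the trace by $\frac12\sum_{k>N}\alpha_k^2k^{-2}$ uniformly in $t$, giving the factor $\frac{3}{2\pi}\cdot\frac14=\frac{3}{8\pi}$. You additionally spell out the independence claim via the Fourier-mode decomposition of $W$ (low modes drive $(Z_k)_{k\in\mathbb{N}}$, high modes drive $Q_NZ$), which the paper's proof leaves implicit; this is a welcome completion rather than a departure.
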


A stronger result is proven in \cite{DBAJ:Galerkin}, 
where  we even could take the supremum in time
over bounded intervals inside the expectation and thus use $L^\infty$- instead of $L^4$-norms.
But the constant in \cite{DBAJ:Galerkin} is not calculated explicitly.

The bound of Lemma \ref{lemBouQ} is still not numerically computable, but given a bounded sequence $\alpha_k$,
it is  usually straightforward to evaluate (or bound)  the series explicitly. See Remark \ref{rem:mainres}.

\begin{proof}
We start by using Lemma \ref{lem:OUL4}, as $Q_N Z(t) \sim \mathcal{N} (0, Q_N Q \int_0^t e^{2sA}ds)$ in $H$, 
to obtain
\[
 \begin{split}
   \mathbb{E} \|Q_NZ(t)\|_{L^4}^4
   &\leq  \frac{3}{2\pi} (  \mathbb{E} \|Q_NZ(t)\|_{L^2}^2 )^2 \\
   &= \frac{3}{2\pi} \Big(  \sum_{k>N} \alpha_k^2 \int_0^t e^{-2k^2s}ds \Big)^2\;.
 \end{split}
\]
This easily implies the claim.
\end{proof}

\subsection{OU-bridge}

In order to treat random fluctuations between discretization points, we
define for $\tau\in(0,h)$
 \begin{equation}
\label{e:defcZ}
\mathcal{Z}_n(\tau) =\int_{nh}^{nh+\tau}e^{A(nh+\tau-s)}dW_{s}\;.
 \end{equation}
First the processes $ \{\mathcal{Z}_n\}_{n\in\mathbb{N}}$ are independent and identically distributed.
Denote also the high modes  $\mathcal{Z}^{(h)}_n = (I-P_N)\mathcal{Z}_n$ and the low modes  $\mathcal{Z}^{(l)}_n = P_N\mathcal{Z}_n$,
which are by definition mutually independent.

Moreover, it is easy to see that
$\{\mathcal{Z}_n(\tau)\}_{\tau\in[0,h]}$
depends on  $ \{Z_k\}_{k\in\mathbb{N}}$
only via $\mathcal{Z}^{(l)}_n(h) = Z_{n+1}-e^{hA}Z_n$.
Thus, recalling that $ \{Z_k\}_{k\in\mathbb{N}}$ is a Markov-process we obtain
\[
\mathbb{E}[\int_0^h\|  \mathcal{Z}_n(\tau)\|_{L^4}^4 d\tau  | (Z_k)_{k \in \mathbb{N}}]
=  \mathbb{E}[\int_0^h\|  \mathcal{Z}_n(\tau)\|_{L^4}^4 d\tau | \mathcal{Z}^{(l)}_{n}(h)] \;.
\]
This yields the following Lemma:
   \begin{lemma}
   \label{lem:defN}
\[
\mathbb{E}[\int_0^h\|  \mathcal{Z}_n(\tau)\|_{L^4}^4 d\tau | (Z_k)_{k \in \mathbb{N}} ] = \mathcal{N}(Z_{n+1} - e^{hA}Z_n)
\]
where
\[
 \mathcal{N}(z) = \mathbb{E}[\int_0^h\|  \mathcal{Z}_0(\tau)\|_{L^4}^4 d\tau | \mathcal{Z}^{(l)}_0(h)=z]
 \]
 \end{lemma}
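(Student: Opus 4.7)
The plan is to make rigorous the three observations already collected in the paragraph preceding the statement: (i) the $\sigma$-algebra generated by $\{Z_k\}_{k\in\mathbb{N}}$ coincides with the one generated by the Gaussian increments $\{X_k\}_{k\in\mathbb{N}}$; (ii) the bridge process on the $n$-th slab $[nh,(n+1)h]$ is independent of the Wiener increments on all other slabs; and (iii) the family $\{\mathcal{Z}_n\}_{n\in\mathbb{N}}$ is identically distributed.

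First I would fix $n$ and introduce the $\sigma$-algebra $\mathcal{F}_n^{*}=\sigma(W_t-W_{nh}:t\in[nh,(n+1)h])$. Directly from \eqref{e:defcZ}, both the integral $Y_n := \int_0^h \|\mathcal{Z}_n(\tau)\|_{L^4}^4\,d\tau$ and the Gaussian $X_{n+1}=\mathcal{Z}_n^{(l)}(h)$ are $\mathcal{F}_n^{*}$-measurable, while each $X_m$ with $m\neq n+1$ is built from Wiener increments on a disjoint time interval and is therefore independent of $\mathcal{F}_n^{*}$. Since $Z_0=0$ and $Z_{k+1}=e^{hA}Z_k+X_{k+1}$, the filtration generated by $\{Z_k\}_{k\in\mathbb{N}}$ agrees with the one generated by $\{X_k\}_{k\in\mathbb{N}}$.

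Second, I would apply the standard fact that if a random variable $Y$ and a $\sigma$-algebra $\mathcal{G}_2$ both lie in some $\sigma$-algebra $\mathcal{F}$ that is independent of a further $\sigma$-algebra $\mathcal{G}_1$, then $\mathbb{E}[Y\mid\mathcal{G}_1\vee\mathcal{G}_2]=\mathbb{E}[Y\mid\mathcal{G}_2]$. Invoked with $Y=Y_n$, $\mathcal{F}=\mathcal{F}_n^{*}$, $\mathcal{G}_2=\sigma(X_{n+1})$ and $\mathcal{G}_1=\sigma(X_m:m\neq n+1)$, this yields
\[
\mathbb{E}[Y_n \mid (Z_k)_{k\in\mathbb{N}}] = \mathbb{E}[Y_n \mid \mathcal{Z}_n^{(l)}(h)].
\]
The identical distribution of the slabs then allows me to replace $n$ by $0$ in the regular conditional law: the right-hand side equals $\mathcal{N}(\mathcal{Z}_n^{(l)}(h))=\mathcal{N}(Z_{n+1}-e^{hA}Z_n)$, which is the claimed identity.

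The only mildly delicate step I anticipate is the independence-based reduction of the conditioning: one must check that the pair $(Y_n,X_{n+1})$ is \emph{jointly} independent of the family $\{X_m:m\neq n+1\}$, not merely pairwise independent. This follows from the independent-increments property of $W$ applied to the partition into slabs, but it is the one point that deserves to be spelled out rather than left as ``easy to see''.
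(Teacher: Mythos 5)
Your proposal is correct and follows essentially the same route as the paper, which justifies the lemma only informally by noting that $\mathcal{Z}_n$ depends on $(Z_k)_{k\in\mathbb{N}}$ only through $\mathcal{Z}_n^{(l)}(h)=Z_{n+1}-e^{hA}Z_n$ together with the Markov property and the i.i.d.\ structure of the slabs. Your version merely makes this precise via the slab $\sigma$-algebra $\mathcal{F}_n^{*}$ and the standard independence lemma for conditional expectations, and your closing caveat about \emph{joint} independence of the increments is exactly the point the paper leaves implicit.
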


Note that this splits into the infinite dimensional remainder and an OU-bridge process on the low modes.
For the OU-bridge process by a result of \cite{Orn:Gold} we know explicitly the law:

 \begin{lemma}
 \label{lem:OUB}
For $t\in[0,h]$
the law of $\mathcal{Z}^{(l)}_0(t)$
given $ \mathcal{Z}^{(l)}_0(h)=z $ with $z\in P_N H$
is a Gaussian with mean $\lambda(t,z)$ and covariance $\tilde{\mathcal{Q}}_t$
with
\[
\lambda(t,z) = 2 P_N \int_0^t e^{2As}ds \cdot e^{A(h-t)}[I-e^{2hA}]^{-1} A z
\]
and
\[
\tilde{\mathcal{Q}}_t=  P_N\frac{Q}{2A} \frac{1-e^{2tA}}{1-e^{2hA}}  \left(I -  e^{2A(h-t)}\right) \;.
\]
\end{lemma}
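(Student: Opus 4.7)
The plan is to diagonalise everything in the Fourier basis and thereby reduce the lemma to a one-dimensional Gaussian conditioning problem for a scalar Ornstein--Uhlenbeck process on $[0,h]$ conditioned on its endpoint. Since $Q$, $A$, and $P_N$ are simultaneously diagonal in $\{e_k\}$, the process $\mathcal{Z}^{(l)}_0(\cdot)$ decouples into $N$ independent scalar OU processes
\[
X_k(t) := \langle \mathcal{Z}^{(l)}_0(t), e_k \rangle = \alpha_k \int_0^t e^{-k^2(t-s)}\, d\beta_k(s), \quad k=1,\dots,N,
\]
driven by independent standard Brownian motions $\beta_k$. The conditional law given $\mathcal{Z}^{(l)}_0(h)=z$ then factorises into the scalar conditional laws of $X_k(t)$ given $X_k(h)=\langle z,e_k\rangle$, so it suffices to prove the one-dimensional statement and reassemble the mean and covariance in operator form.

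For the scalar step I would use the independent-increments decomposition
\[
X_k(h) = e^{-k^2(h-t)}\, X_k(t) + \eta_k, \qquad \eta_k := \alpha_k \int_t^h e^{-k^2(h-s)}\, d\beta_k(s),
\]
in which $\eta_k$ is independent of $X_k(t)$ by the It\^o isometry. Since $(X_k(t),X_k(h))$ is then a centred bivariate Gaussian, the classical conditional-distribution formula for jointly Gaussian variables gives
\[
\mathbb{E}[X_k(t)\mid X_k(h)=z_k] = \frac{\mathrm{Cov}(X_k(t),X_k(h))}{\mathrm{Var}(X_k(h))}\, z_k,
\]
with $\mathrm{Var}(X_k(t)\mid X_k(h)) = \mathrm{Var}(X_k(t)) - \mathrm{Cov}(X_k(t),X_k(h))^2/\mathrm{Var}(X_k(h))$. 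Evaluating the three second-order quantities by It\^o isometry and applying the telescoping identity $(1-e^{-2k^2 h}) - e^{-2k^2(h-t)}(1-e^{-2k^2 t}) = 1 - e^{-2k^2(h-t)}$ then yields
\[
\mathbb{E}[X_k(t)\mid X_k(h)=z_k] = e^{-k^2(h-t)}\,\frac{1-e^{-2k^2 t}}{1-e^{-2k^2 h}}\,z_k,
\]
together with conditional variance $\alpha_k^2(1-e^{-2k^2 t})(1-e^{-2k^2(h-t)})/[2k^2(1-e^{-2k^2 h})]$.

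Finally I would recognise each factor $(1-e^{-2k^2 s})/(2k^2)$ as the $k$-th eigenvalue of $\int_0^s e^{2A\sigma}d\sigma$, use the relation $2A\int_0^t e^{2As}ds = e^{2tA}-I$ to peel out the $A$ appearing in $\lambda(t,z)$, and bundle the $\alpha_k^2$ into the operator $Q$, so that the scalar mean becomes the $k$-th entry of $\lambda(t,z)$ and the scalar variance the $k$-th eigenvalue of $\tilde{\mathcal{Q}}_t$. The hard part is purely bookkeeping: tracking signs carefully (since $A$ is a negative operator and $I-e^{2hA}$ appears under an inverse) and factoring the scalar expressions so they line up with the operator products in the claimed formulas.
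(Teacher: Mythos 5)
Your proposal is correct, and it takes a genuinely different route from the paper. The paper does not compute the bridge law from scratch: it invokes the general infinite-dimensional Ornstein--Uhlenbeck bridge results of Goldys and Maslowski (Propositions 2.11 and 2.13 of \cite{Orn:Gold}), which give the conditional covariance as $P_NQ_t(I-V_t^2)$ and the conditional mean as $P_NK_tQ_h^{-1/2}z$ for the operators $Q_t=Q\int_0^te^{2As}ds$, $V_t=Q_h^{-1/2}e^{A(h-t)}Q_t^{1/2}$, $K_t=Q_t^{1/2}V_t$, and then simplifies these expressions using that all operators commute and are diagonal. You instead diagonalise first and reduce to $N$ independent scalar OU bridges, which you resolve by the elementary bivariate-Gaussian conditioning formula via the independent-increment decomposition $X_k(h)=e^{-k^2(h-t)}X_k(t)+\eta_k$. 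Your computations check out: the covariances and the telescoping identity give exactly the eigenvalues $e^{-k^2(h-t)}\frac{1-e^{-2k^2t}}{1-e^{-2k^2h}}$ for the mean and $\frac{\alpha_k^2}{2k^2}\frac{(1-e^{-2k^2t})(1-e^{-2k^2(h-t)})}{1-e^{-2k^2h}}$ for the variance, which agree with the intermediate operator identity $\lambda(t,z)=P_N\int_0^te^{2As}ds\cdot e^{A(h-t)}[\int_0^he^{2As}ds]^{-1}z$ appearing in the paper's proof. What each approach buys: the paper's route is shorter and would extend to non-diagonal, non-commuting settings where a product decomposition is unavailable, at the price of importing a nontrivial external theorem; your route is fully self-contained, elementary, and makes every constant verifiable by hand, but is tied to the simultaneous diagonalisability of $A$, $Q$ and $P_N$. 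One payoff of your sign bookkeeping is worth stating explicitly: since $A$ has eigenvalues $-k^2$, your (manifestly positive) scalar formulas show that the operator expressions as printed in the lemma carry a sign slip --- $[\int_0^he^{2As}ds]^{-1}=-2A[I-e^{2hA}]^{-1}$, so the mean should read $-2P_N\int_0^te^{2As}ds\cdot e^{A(h-t)}A[I-e^{2hA}]^{-1}z$ and the covariance $-P_N\frac{Q}{2A}\frac{1-e^{2tA}}{1-e^{2hA}}(I-e^{2A(h-t)})$; the paper's later trace computation in Lemma \ref{lem:boundN} silently uses the correct positive version, so nothing downstream is affected.
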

\begin{proof}
We follow the result of \cite{Orn:Gold}, but our setting is much simpler.
First all operators involved are diagonal and thus symmetric. Furthermore, they all commute.
We can also treat degenerate noise by restricting the results of \cite{Orn:Gold} to the
Hilbert-space given by the range of $Q$, which is in general only a subset of $P_N H$.
But then both $Q$ and $A$ are invertible on that space.

First,
\[ \text{Law}[\mathcal{Z}^{(l)}_0(t)| \mathcal{Z}_0^{(l)}(h)=z] =  \mathcal{N}(\lambda(t,z),\tilde{\mathcal{Q}}_t)
\]
where by
\cite[Prop. 2.11]{Orn:Gold}
\[
\tilde{\mathcal{Q}}_t = P_N Q_t(I-V_t^2)
\]
and by \cite[Prop. 2.13]{Orn:Gold}
\[
\lambda(t,z) = P_N K_tQ_h^{-1/2}z
\]
with the following operators,
\[ Q_t= Q \int_{0}^{t}e^{2As}ds , \quad
		 V_{t}=Q_{h}^{-\frac 12}e^{A(h-t)}Q_{t}^{\frac 12} , \quad
		 K_t=Q_t^{\frac12}V_{t} = Q_{h}^{-\frac 12}e^{A(h-t)}Q_{t}\;.
\]
This implies
\[
\lambda(t,z) = P_NQ_{h}^{-1}e^{A(h-t)}Q_{t}z = P_N\int_{0}^{t}e^{2As}ds \cdot e^{A(h-t)}\Big[\int_0^h  e^{2As}ds \Big]^{-1} z\;.
\]
and
\[
\begin{split}
 \tilde{\mathcal{Q}}_t
 = &  P_NQ_t(I- Q_{h}^{-1}e^{2A(h-t)}Q_{t}) \\
 =&  P_N Q_t Q_h^{-1}(Q_h - e^{2A(h-t)}Q_{t}) \\
  =&  P_N Q \frac{1-e^{2tA}}{1-e^{2hA}}  \frac12 A^{-1} \left(I - e^{2hA}-  e^{2A(h-t)}(1-e^{2tA})\right) \\
  =& P_N  \frac{Q}{2A} \frac{1-e^{2tA}}{1-e^{2hA}}  \left(I -  e^{2A(h-t)}\right)\;.
\end{split}
\]
 \end{proof}

   The following  bound is surely not optimal, but a slight simplification of the exact bound.

 \begin{lemma}
 \label{lem:boundN} We bound for $z\in P_N H$
\[\mathcal{N}(z) \le  h \cdot \mathcal{S}_h(z)\]
where  we define
\[
\mathcal{S}_h(z) = \Big[\frac{2 h }{\sqrt[4]{5}}  \|[I-e^{2hA}]^{-1} A z\|_{L^4}
+
\Big(\frac{3}{2\pi}\Big)^{1/4} \left( h^{-1/2} \Sigma_N(h) + \sum_{k=N+1}^\infty  \frac{\alpha_k^2 }{2k^2}   \right)^{1/2}   \Big]^4
\]
where
\[ \Sigma_N(h)
 = \sum_{k=1}^N \frac{\alpha_k^2 }{\sqrt8 \cdot k^3} \frac{(8hk^2e^{-2hk^2}+(2hk^2+3)e^{-4hk^2}+2hk^2-3 )^{1/2}}{1-e^{-2hk^2}} \;.
\]

 \end{lemma}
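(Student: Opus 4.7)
The plan is to compute the conditional law of $\mathcal{Z}_0(\tau)$ given $\mathcal{Z}_0^{(l)}(h)=z$ explicitly, apply Minkowski's inequality to split off the deterministic drift, and bound the remaining centered Gaussian part via Lemma \ref{lem:OUL4}. Writing $\mathcal{Z}_0(\tau)=\mathcal{Z}_0^{(l)}(\tau)+\mathcal{Z}_0^{(h)}(\tau)$, the diagonal structure of $Q$ ensures that the high modes are independent of $\mathcal{Z}_0^{(l)}(h)$, so their conditional law coincides with the unconditional one. Combined with Lemma \ref{lem:OUB}, conditionally on $\mathcal{Z}_0^{(l)}(h)=z$ the vector $\mathcal{Z}_0(\tau)$ is Gaussian with mean $\lambda(\tau,z)$ and covariance $\tilde{\mathcal{Q}}_\tau+Q_NQ\int_0^\tau e^{2As}\,ds$.

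To peel off the drift I would apply Minkowski's inequality twice---once in $L^4$ of the conditional law and once in $L^4([0,h],d\tau)$---to obtain
\[
\mathcal{N}(z)^{1/4}\le\Bigl(\int_0^h\|\lambda(\tau,z)\|_{L^4}^4\,d\tau\Bigr)^{1/4}+\Bigl(\int_0^h\mathbb{E}[\|\mathcal{Z}_0(\tau)-\lambda(\tau,z)\|_{L^4}^4\mid z]\,d\tau\Bigr)^{1/4}.
\]
For the first summand, the $L^4$-contractivity of $\{e^{tA}\}$ from \eqref{e:RT}, together with the explicit expression for $\lambda$, yields $\|\lambda(\tau,z)\|_{L^4}\le 2\tau\,\|[I-e^{2hA}]^{-1}Az\|_{L^4}$. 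Integrating via $\int_0^h\tau^4\,d\tau=h^5/5$ then produces $h^{1/4}\cdot\tfrac{2h}{\sqrt[4]{5}}\|[I-e^{2hA}]^{-1}Az\|_{L^4}$, which is exactly the first bracket-term of $h^{1/4}\mathcal{S}_h(z)^{1/4}$.

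For the centered Gaussian summand Lemma \ref{lem:OUL4} reduces matters to the trace; in the Fourier basis it splits as
\[
T_1(\tau):=\sum_{k=1}^N\frac{\alpha_k^2}{2k^2}\cdot\frac{(1-e^{-2\tau k^2})(1-e^{-2(h-\tau)k^2})}{1-e^{-2hk^2}},\qquad T_2(\tau)\le\sum_{k>N}\frac{\alpha_k^2}{2k^2}=:T_2.
\]
Minkowski in $L^2([0,h],d\tau)$ separates $T_1$ and $T_2$; the high-mode part contributes $h^{1/2}T_2$, while for the low-mode part a further application of Minkowski in the sum over $k\le N$ reduces everything to the single integral $\int_0^h(1-e^{-a\tau})^2(1-e^{-a(h-\tau)})^2\,d\tau$ with $a=2k^2$. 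Expanding the product of squares and integrating term by term gives $h(1+4e^{-ah}+e^{-2ah})-3(1-e^{-2ah})/a$; dividing by $(1-e^{-2hk^2})^2$ and extracting a factor $1/(2k^2)$ then yields exactly the polynomial $8hk^2e^{-2hk^2}+(2hk^2+3)e^{-4hk^2}+2hk^2-3$ appearing under the square root in $\Sigma_N(h)$. Using the identity $\Sigma_N(h)+h^{1/2}T_2=h^{1/2}(h^{-1/2}\Sigma_N(h)+T_2)$ factors out the required $h^{1/4}$ and matches the second bracket-term of $h^{1/4}\mathcal{S}_h(z)^{1/4}$; raising to the fourth power yields $\mathcal{N}(z)\le h\,\mathcal{S}_h(z)$. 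The main obstacle is this explicit time integration and the bookkeeping needed to collapse the result to the closed-form polynomial inside $\Sigma_N(h)$; everything else is a routine deployment of Minkowski's inequality in appropriate $L^p$-spaces.
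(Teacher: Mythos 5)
Your proposal is correct and follows essentially the same route as the paper's proof: the conditional Gaussian law from Lemma \ref{lem:OUB} plus the independent high-mode remainder, Minkowski to split off the drift $\lambda$, Lemma \ref{lem:OUL4} to reduce the centered part to the trace, and the explicit $L^2(0,h)$-integration producing the polynomial in $\Sigma_N(h)$ (which you computed correctly). Your bound $h^{1/2}\sum_{k>N}\alpha_k^2/(2k^2)$ for the high-mode contribution is in fact the correct one matching the stated $\mathcal{S}_h$, whereas the paper's intermediate display writes $h$ in place of $h^{1/2}$ there.
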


  We can explicitly calculate an upper bound for $\mathcal{S}_h$ by first using numerical data for $z$,
  and then for the infinite
  series we can use
  \[
   \sum_{k>N}  \frac{\alpha_k^2 }{k^{2}}
   \leq   \frac{\sup_{k>N}\alpha_k^2}{N} \;.
  \]

  \begin{proof}
   First using Lemmas \ref{lem:OUB} and \ref{lem:OUL4} and taking into account
   the infinite-dimensional remainder of the OU-process, that is independent of the OU-bridge, we obtain
\[ \text{Law}[\mathcal{Z}_0(t)| \mathcal{Z}_0^{(l)}(h)=z] =  \mathcal{N}(\lambda(t,z),\tilde{\mathcal{Q}}_t)
\]
on $H$
  with covariance operator $\hat{\mathcal{Q}}_t$ being diagonal in Fourier space with
\[P_N\hat{\mathcal{Q}}_t = \tilde{\mathcal{Q}}_t
\quad \text{and} \quad
(I-P_N) \hat{\mathcal{Q}}_t  = Q \int_0^t e^{2sA}ds
\]
where
   \[
    \mathcal{N}(z)^{1/4}
    \leq
    \Big(\int_0^h\|\lambda(s,z)\|_{L^4}^4 ds \Big)^{1/4}
    +  \Big( \frac{3}{2\pi} \int_0^h \text{trace}\left(  \tilde{\mathcal{Q}}_s \right)^2 ds\Big)^{1/4}
   \]
   Now
   \[
   \begin{split}
      \text{trace}(  \hat{\mathcal{Q}}_s ) &= \text{trace}(  \tilde{\mathcal{Q}}_s ) + \text{trace}(  Q \int_0^t e^{2\eta A}d\eta )
      \\&
   =  \sum_{k=1}^N \frac{\alpha_k^2 }{2k^2} \frac{1-e^{-2sk^2}}{1-e^{-2hk^2}}  \left(1 -  e^{-2k^2(h-s)}\right)
   + \sum_{k=N+1}^\infty \frac{\alpha_k^2 }{2k^2}   \left(1 -  e^{-2k^2s}\right)
   \end{split}
   \]
   and thus
   \[
    \begin{split}
    \int_0^h\text{trace}(  \hat{\mathcal{Q}}_s )^2  ds
    &=  \| \text{trace}(\hat{\mathcal{Q}}_s)\|_{L^2(0,h)}^2 \\
    & \leq
    \Big(  \| \text{trace}(\tilde{\mathcal{Q}}_s)\|_{L^2(0,h)} +\| \text{trace}(Q \int_0^\cdot  e^{2\eta A}d\eta  )\|_{L^2(0,h)}  \Big)^2
   \end{split}
   \]
   where we bound
   \[
   \begin{split}
    &\| \text{trace}(\tilde{\mathcal{Q}}_s)\|_{L^2(0,h)} \\
    \leq &
    \sum_{k=1}^N \frac{\alpha_k^2 }{2k^2} \frac1{1-e^{-2hk^2}}      \| (1-e^{-2sk^2})(1 -  e^{-2k^2(h-s)})\|_{L^2(0,h)} \\
    \leq&
     \sum_{k\in\mathbb{N}} \frac{\alpha_k^2 }{2k^2} \frac1{1-e^{-2hk^2}} \frac1{\sqrt{2k^2}} \Big(8hk^2e^{-2hk^2}+(2hk^2+3)e^{-4hk^2}+2hk^2-3 \Big)^{1/2}  \\
    \leq &
   \sum_{k\in\mathbb{N}} \frac{\alpha_k^2 }{\sqrt8 \cdot k^3} \frac{(8hk^2e^{-2hk^2}+(2hk^2+3)e^{-4hk^2}+2hk^2-3 )^{1/2}}{1-e^{-2hk^2}}
   =\Sigma_N(h) \;.
 \end{split}
   \]
  Moreover,
  \[
  \| \text{trace}(Q \int_0^\cdot  e^{2\eta A}d\eta  )\|_{L^2(0,h)}
   \leq  \sum_{k=N+1}^\infty  \frac{\alpha_k^2 }{2k^2}\| 1-e^{-2k^2t}\|_{L^2(0,h)}
   \leq h  \sum_{k=N+1}^\infty  \frac{\alpha_k^2 }{2k^2}\;.
  \]
 Thus
 \[
   \int_0^h\text{trace}(  \hat{\mathcal{Q}}_s )^2  ds
    \leq
    \left(\Sigma_N(h) +  h \sum_{k=N+1}^\infty  \frac{\alpha_k^2 }{2k^2}   \right)^2
   \]
   For the mean value, we obtain from Lemma \ref{lem:OUB} using \eqref{e:RT}
   \[
   \|\lambda(t,z)\|_{L^4}   \leq  2t \|[I-e^{2hA}]^{-1} A z\|_{L^4} \;.
   \]
   Thus
   \[
   \int_0^h \|\lambda(t,z)\|_{L^4}^4 dt   \leq  \frac{2^4}{5} h^5 \|[I-e^{2hA}]^{-1} A z\|^4_{L^4} \;.
   \]
  \end{proof}


\section{Residual estimates}
\label{sec:Res}


This section is devoted to bounds on the residual, which measures the quality of an arbitrary numerical approximation.
First we consider the discretization points $t_n=nh$,
and later we focus on the points $nh+\tau$, $\tau \in (0,h)$, which are in between.
Recall that for the approximation
$\varphi : [0,T] \to P_N H$ defined in Section \ref{sec:defphi} and
 $t \in [0,T]$ we defined the residual in Definition \ref{def:res}.

\subsection{At discretization points}

In the following Lemma we identify all terms  in the residual
at the discretization points $nh$ that can be calculated explicitly using the numerical data. We define them as $\text{Res}^{\text{dat}}$.
\begin{lemma}\label{lem:Resnh}
	The residual $\{\text{Res}(kh)\ :\ k=0,\ldots, M \}$ defined in \eqref{e:defRes} at discrete times is given as
	\[
	\text{Res}(kh) = \text{Res}^{\text{dat}}_k  +  \text{Res}^{\text{stoch}}_k
	\]
	where
	\[ \text{Res}^{\text{dat}}_k = I_2(k)+I_3(k)
	  \]
	  given by the recursive scheme $I_j(0)=0$,
	  \[
	  I_2(n)=e^{hA} I_2(n-1) + h  Q_N \int_{0}^{1} e^{sAh } \Big[ u_{n}^3
- s   3(u_{n})^2 d_n + s^2  3u_{n}(d_n)^2
-  s^3 (d_n)^3\Big] ds
	  \]
	  and
	\[
	  I_3(n)=  e^{hA} I_3(n-1) + h P_N \int_{0}^{1} e^{sAh}  \Big[(u_n^3- u_{n-1}^3)
 -3 s d_nu_{n}^2
 +3 s^2  d_n^2 u_{n}
 - s^3 d_n^3 \Big]ds.
\]
	Moreover,
	\[\text{Res}^{\text{stoch}}_n = - Q_N Z(nh)\]
        is random and independent of the numerical data.
\end{lemma}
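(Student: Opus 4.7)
The plan is to start from the definition of the residual in \eqref{e:defRes}, substitute the closed-form expression for $u_n=\varphi(nh)$ obtained by unrolling the numerical scheme, and then reorganize the pieces into the stochastic and data parts.

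First I would write
\[
u_n = e^{Anh}P_Nu_\star + \sum_{k=1}^n \int_{(k-1)h}^{kh} e^{A(nh-s)}\, P_NF(u_{k-1})\,ds + Z_n
\]
(reading $F_N$ as $P_NF$), so that $\varphi(0)=P_Nu_\star$ makes the linear semigroup terms in Res cancel, and $Z_n-Z(nh)=-Q_NZ(nh)$ is isolated as $\mathrm{Res}^{\mathrm{stoch}}_n$. What remains is precisely the sum
\[
-\sum_{k=1}^n \int_{(k-1)h}^{kh}e^{A(nh-s)}\bigl(F(\varphi(s))-P_NF(u_{k-1})\bigr)\,ds
\]
already announced in the text preceding the lemma.

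The bulk of the work is to match this sum with $I_2(n)+I_3(n)$. Using $F(v)=-v^3$ and the decomposition $I=P_N+Q_N$, I split the integrand into $Q_N\varphi(s)^3 + P_N(\varphi(s)^3-u_{k-1}^3)$. On each subinterval I then plug in the linear interpolation $\varphi(s) = u_k - \tfrac{kh-s}{h}d_k$ with $d_k=u_k-u_{k-1}$, perform the change of variable $s'=(kh-s)/h\in(0,1)$ which transforms $\int_{(k-1)h}^{kh}e^{A(nh-s)}\,ds$ into $h\,e^{A(n-k)h}\int_0^1 e^{s'hA}\,ds'$, and expand the cube $(u_k-s'd_k)^3$ by the binomial theorem. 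This produces precisely the integrands displayed in the definitions of $I_2(n)$ (paired with $Q_N$) and $I_3(n)$ (paired with $P_N$, where $-u_{k-1}^3$ combines with the cube expansion to give the stated polynomial in $s'$).

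To obtain the recursion, I split the sum as $\sum_{k=1}^n=\sum_{k=1}^{n-1}+(k=n)$ and factor $e^{A(n-k)h}=e^{hA}\cdot e^{A(n-1-k)h}$ for $k\leq n-1$; the first piece is then visibly $e^{hA}I_j(n-1)$, while the $k=n$ term supplies the new local increment written in the lemma. The base case is automatic since the empty sum gives $I_j(0)=0$. For the independence claim, since $Q$ is diagonal in the Fourier basis, $P_NZ$ and $Q_NZ$ are driven by disjoint families of independent scalar Brownian motions and are therefore independent; as $\{Z_k\}_{k\in\N}$ is a measurable function of $\{P_NZ(jh)\}_{j}$, independence of $\mathrm{Res}^{\mathrm{stoch}}_n=-Q_NZ(nh)$ from the numerical data follows. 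The only real obstacle is the bookkeeping of the cubic expansion together with the change of variable so that the resulting integrands for $I_2$ and $I_3$ come out with the correct signs, coefficients, and projections; no analytic issue arises.
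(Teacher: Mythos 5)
Your proposal is correct and follows essentially the same route as the paper: starting from the pre-lemma formula for $\mathrm{Res}(nh)$, splitting via $I=P_N+Q_N$ into the $Q_N F(\varphi)$ and $P_N(F(\varphi)-F(u_{k-1}))$ sums, substituting the linear interpolation, changing variables to $[0,1]$, expanding the cube, and extracting the recursion by peeling off the $k=n$ term and factoring $e^{hA}$ from the rest. The only additions beyond the paper's written proof are your explicit derivation of the pre-lemma identity and the independence argument via the diagonality of $Q$, both of which are consistent with what the paper asserts elsewhere.
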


The value of $\text{Res}^{\text{dat}}$ just depends on the numerical data $u_k$ and $d_k=u_k-u_{k-1}$.
Note that the cubic terms depending on $u_k$ and $d_k$ are all in $H_{3N}$
and thus computable. The integrals are all over diagonal matrices and can be calculated explicitly in the numerical evaluation.

\begin{proof}
We have
\begin{equation}\label{Resnh}
\begin{split}
 \text{Res}(nh)
 = &  -
  \sum_{k=1}^{n}\int_{(k-1)h}^{kh} e^{A(nh-s)} Q_N F(\varphi(s)) ds
 \\
 & -\sum_{k=1}^{n}\int_{(k-1)h}^{kh}  e^{A(nh-s)}  P_N [(F(\varphi(s))-F(u_{k-1})]\; ds
 \\& -  Q_N Z(nh)
 \\ =:  & I_2(n) + I_3(n) -  Q_N Z(nh)\;.
 \end{split}
\end{equation}
For the two integrals we use for $s\in[(k-1)h,kh]$
that  $\varphi(s)= u_{k-1}+\frac{\tau}{h}d_k$, where $s=(k-1)h+\tau$ with $\tau\in(0,h)$ and $d_k=u_k-u_{k-1}$,
which depends only on the numerical data.

For the first integral $I_2$ on the right hand side we note that although it looks infinite dimensional,
due to the cubic nonlinearity it is finite dimensional and we can calculate it explicitly:
\[
\begin{split}
I_2(n)  & := -\sum_{k=1}^{n} \int_{0}^{h} e^{A(h(n-k+1)- s)} Q_N F(u_{k-1}+\frac{s}{h}d_k) ds \\
& =  -h \sum_{k=1}^{n} \int_{0}^{1} e^{Ah (n-k+ s)}  Q_N F(u_{k}-s d_k)ds \\
&=  e^{hA} I_2(n-1)-h\int_{0}^{1} e^{Ah s}  Q_N F(u_{k}-s d_k)ds\\
&=  e^{hA} I_2(n-1) + h  Q_N \Big[\int_{0}^{1} e^{As h} ds  \cdot  u_{n}^3
- \int_{0}^{1} e^{Ahs}s ds \cdot  3(u_{n})^2 d_n \Big] \\
& \qquad +h Q_N\Big[ \int_{0}^{1} e^{Ahs} s^2 ds  \cdot  3u_{n}(d_n)^2
- \int_{0}^{1} e^{Ahs} s^3 ds  \cdot (d_n)^3\Big]
 \end{split}
\]

For the next integral $I_3$ in (\ref{Resnh}) we can proceed similarly as for $I_2$
 \begin{equation*}
\begin{split}
I_3(n)
& = \sum_{k=1}^{n} \int_{(k-1)h}^{kh}  e^{A(nh-s)} P_N \left[ \varphi(s)^3 - (u_{k-1})^3 \right] ds
\\=
&  \sum_{k=1}^{n}\int_{(k-1)h}^{kh} e^{A(nh-s)} P_N \left[ (u_{k-1}+\frac{s-(k-1)h}{h}d_k))^3 - (u_{k-1})^3 \right] ds
  \\
= & h \sum_{k=1}^{n}\int_{0}^{1}  e^{Ah(n-k+1-s)} P_N \left[(u_{k-1}+s d_k)^3- (u_{k-1})^3\right] ds
 \\= &   h \sum_{k=1}^{n}\int_{0}^{1} e^{Ah(n-k+s)} P_N \left[(u_{k-1}+(1-s)d_k)^3- (u_{k-1})^3\right] ds\\
 =& h \sum_{k=1}^{n}\int_{0}^{1} e^{Ah(n-k+s)} P_N \left[(u_{k}-sd_k)^3- (u_{k-1})^3\right] ds\\
 = & e^{hA} I_3(n-1) + h \int_{0}^{1} e^{sAh} P_N \left[(u_{n}-sd_n)^3- (u_{n-1})^3\right] ds \;.
 \end{split}
\end{equation*}
By expanding the cubic we have
\[
 \begin{split}
 I_3(n)& =  e^{hA} I_3(n-1) + h \int_{0}^{1} e^{sAh}P_N ds \cdot (u_n^3- u_{n-1}^3)
 -3h \int_{0}^{1} e^{sAh}sds P_N \cdot  d_nu_{n}^2\\&
 +3h\int_{0}^{1} e^{sAh}P_N s^2 ds \cdot d_n^2 u_{n}
 -h \int_{0}^{1} e^{sAh}P_N s^3 ds \cdot d_n^3.
 \end{split}
\]
\end{proof}


 \subsection{Between discretization points}

 For the residual at times between the numerical grid points we have for $\tau\in(0,h)$
   \begin{equation}
\begin{split}
- &\text{Res} (nh+\tau) \\
= &
e^{A(nh+\tau)}\varphi(0)+\int_{0}^{nh+\tau}e^{A(nh+\tau-s)}F(\varphi(s))ds
+Z(hn+\tau)-\varphi(nh+\tau)
\\= & e^{A(nh+\tau)}\varphi(0)
+\int_{0}^{nh}e^{A(nh+\tau-s)}F(\varphi(s))ds+\int_{nh}^{nh+\tau}e^{A(nh+\tau-s)}F(\varphi(s))ds
\\&+e^{A\tau}Z_n+\int_{nh}^{nh+\tau}e^{A(nh+\tau-s)}dW_{s}-\varphi(nh+\tau)
\\ =&e^{A\tau}\Big[e^{nhA}\varphi(0)+\int_{0}^{nh}e^{A(nh-s)}F(\varphi(s))ds+\int_{nh}^{nh+\tau}e^{A(nh-s)}F(\varphi(s))ds+Z_{n}\Big]
\\&+\int_{nh}^{nh+\tau}e^{A(nh+\tau-s)}dW_{s}-\varphi(nh+\tau)
\\
=&  e^{A\tau}\Big[\text{Res}(nh)+u_{n}+\int_{nh}^{nh+\tau}e^{A(nh-s)}F(\varphi(s))ds\Big]
\\&
+\int_{nh}^{nh+\tau}e^{A(nh+\tau-s)}dW_{s}-\varphi(nh+\tau).
 \end{split}
\end{equation}
 Therefore by the fact that by linear interpolation $\varphi(nh+\tau)=u_{nh}+\frac{\tau}{h}d_{n+1}$,
 where $d_{n+1}=u_{n+1}-u_n$ we get
\begin{equation}\label{Res}
\begin{split}
\text{Res}(nh+\tau) 
= & e^{A\tau}\text{Res}(nh) +(e^{A\tau}-I)u_n
-\frac{\tau}{h}d_{n+1}
\\&+
\int_{nh}^{nh+\tau}e^{A(nh+\tau-s)}F(\varphi(s))ds
+\int_{nh}^{nh+\tau}e^{A(nh+\tau-s)}dW_{s}.
 \end{split}
\end{equation}
Now at some point we cannot evaluate and need to estimate, as due to the $\tau\in(0,h)$, 
we cannot evaluate the terms numerically explicit.

 For the first integral term in the right hand side of (\ref{Res}) we have
   \begin{equation}
   \label{e:defI}
I(\tau)=\int_{nh}^{nh+\tau}e^{A(nh+\tau-s)}F(\varphi(s))ds=
-h\int_0^{\frac{\tau}{h}}e^{A(\tau-sh)}(u_n+sd_{n+1})^3ds\;.
\end{equation}

In order to bound $I(\tau)$, we use that the semigroup $e^{tA}$  generated by the Dirichlet-Laplacian $A$ is a contraction semigroup on any $L^p(0,\pi)$.
See \eqref{e:RT}. Thus we obtain

\begin{equation}
\label{e:bouI}
\begin{split}
\|I(\tau)\|_{L^4}
 \leq  & h \int_0^{\frac{\tau}{h}} \|(u_n+sd_{n+1})^3\|_{L^4}ds \\
 =  & h \int_0^{\frac{\tau}{h}} \|(u_n+sd_{n+1})\|_{L^{12}}^3ds \\
 \leq  & h \int_0^{\frac{\tau}{h}} \left(\|u_n\|_{L^{12}} +s \|d_{n+1}\|_{L^{12}} \right)^3 ds \\
 \leq  &  \tau \|u_n\|_{L^{12}}^3
 + \frac32\frac{\tau^2}{h}  \|u_n\|_{L^{12}}^2\|d_{n+1}\|_{L^{12}}
 \\&
 + \frac{\tau^3}{h^2}  \|u_n\|_{L^{12}}\|d_{n+1}\|_{L^{12}}^2
 + \frac14\frac{\tau^4}{h^3} \|d_{n+1}\|_{L^{12}}^3\;.
 \end{split}
\end{equation}

This still contains powers of $\tau$, but as we are going to integrate this over $\tau$, we keep them and estimate later.
Let us summarize the result starting from \eqref{Res}.

\begin{lemma}
\label{lem:resint}
For $n\in\{0,\ldots,M-1\}$ and $\tau\in[0,h]$ we have
\begin{equation}
\text{Res}(nh+\tau)
=  e^{A\tau}\text{Res}(nh) +(e^{A\tau}-I)u_n
-\frac{\tau}{h}d_{n+1}+
I(\tau)
+\mathcal{Z}_n(\tau),
\end{equation}
with $I$ defined in \eqref{e:defI} and bounded in \eqref{e:bouI}
and $\mathcal{Z}_n$ was defined in \eqref{e:defcZ}.
\end{lemma}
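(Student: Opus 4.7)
The plan is essentially to carry out the algebraic manipulation that reduces the residual at a general time $nh+\tau$ to the residual at the preceding grid point plus explicit correction terms. Since both the interpolation $\varphi$ and the continuous mild formulation \eqref{e:mild} are pinned together at $t=nh$ via the value $u_n$, this reduction is mostly bookkeeping once one isolates the stochastic OU-bridge increment; in fact the identity has already been derived in display \eqref{Res} just before the statement, so the task is to package it.

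First, I would evaluate Definition \ref{def:res} at time $nh+\tau$. Using the semigroup identity $e^{A(nh+\tau)}=e^{A\tau}e^{Anh}$, I would split the deterministic drift as $\int_0^{nh+\tau}=\int_0^{nh}+\int_{nh}^{nh+\tau}$ and factor $e^{A\tau}$ out of the first piece. Similarly, I would split the stochastic convolution as $Z(nh+\tau)=e^{A\tau}Z(nh)+\mathcal{Z}_n(\tau)$, which is immediate from the definition \eqref{e:defcZ} together with the semigroup property of the Itô integral against a deterministic integrand.

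Second, I would recognise that the combination $e^{Anh}\varphi(0)+\int_0^{nh}e^{A(nh-s)}F(\varphi(s))\,ds+Z(nh)$ now sits inside the $e^{A\tau}$ factor, and by the definition of $\text{Res}(nh)$ together with $\varphi(nh)=u_n$ this combination equals $u_n-\text{Res}(nh)$. Substituting this replaces the analytically inaccessible past of the mild solution by the numerically available $u_n$ and the previously analysed $\text{Res}(nh)$ from Lemma \ref{lem:Resnh}. The remaining drift piece $\int_{nh}^{nh+\tau}e^{A(nh+\tau-s)}F(\varphi(s))\,ds$ is by construction $I(\tau)$ from \eqref{e:defI}, and the remaining stochastic piece is $\mathcal{Z}_n(\tau)$.

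Finally, I would invoke linear interpolation, $\varphi(nh+\tau)=u_n+\tfrac{\tau}{h}d_{n+1}$, to rewrite $\varphi(nh+\tau)-e^{A\tau}u_n$ as $-(e^{A\tau}-I)u_n+\tfrac{\tau}{h}d_{n+1}$, collect terms, and read off the claimed identity. The $L^4$-bound \eqref{e:bouI} for $I(\tau)$ has already been produced before the lemma via the $L^{12}$-contraction property \eqref{e:RT} of $e^{tA}$ and the pointwise expansion of $(u_n+sd_{n+1})^3$, so no further analytic work is required. The only real obstacle is keeping track of signs, since $\text{Res}$ is defined with $-Z$ and $-\int F$ while the mild equation carries the opposite signs; other than this elementary bookkeeping, the lemma is a direct rearrangement.
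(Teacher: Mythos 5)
Your proposal follows essentially the same route as the paper: split the drift integral and the stochastic convolution at $t=nh$, factor out $e^{A\tau}$, identify the bracketed past with the value $u_n$ and $\mathrm{Res}(nh)$, and use the linear interpolation formula for $\varphi(nh+\tau)$. One caveat on the bookkeeping you rightly flag as the only real obstacle: carrying out your (correct) substitution $e^{Anh}\varphi(0)+\int_0^{nh}e^{A(nh-s)}F(\varphi)\,ds+Z(nh)=u_n-\mathrm{Res}(nh)$ actually yields
\[
\mathrm{Res}(nh+\tau)=e^{A\tau}\mathrm{Res}(nh)-(e^{A\tau}-I)u_n+\tfrac{\tau}{h}d_{n+1}-I(\tau)-\mathcal{Z}_n(\tau),
\]
i.e.\ the stated identity with the signs of the last four terms reversed, whereas the paper's own derivation carries a compensating sign slip (it writes $\mathrm{Res}(nh)+u_n$ for the bracket and then negates once more). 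Since Lemma \ref{lem:resint} is only ever used through the triangle inequality in $L^4$, this discrepancy is harmless downstream, but with careful signs you would not literally ``read off the claimed identity'' as stated.
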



\subsection{Bounding the residual}

Now we are going to bound the residual for intermediate times. Let us fix $n\in\{0,\ldots,M-1\}$ and $\tau\in[0,h]$.
In view of Lemma \ref{lem:resint}, we first bound
\[
\|(e^{A\tau}-I)u_n\|_{L^4} \leq  \| \int_0^\tau A e^{As}ds u_n\|_{L^4} \leq \tau \|Au_n\|_{L^4}
\]
to obtain
 \begin{equation}\label{Resnhtau}
 \|\text{Res}(nh+\tau)\|_{L^4}
  \leq  \|\text{Res} (nh)\|_{L^4} + \tau \|Au_n\|_{L^4} +  \frac{\tau}h\|d_{n+1}\|_{L^4}
  + \|I(\tau)\|_{L^4}+\|Z_n(\tau)\|_{L^4}.
 \end{equation}
In order to bound the residual in a conditional $L^4$-moment, we define
\[
n(s)=n \iff s\in[nh,h(n+1)) \quad \text{and}\quad \tau(s)=s-n(s)h \in[0,h)\;.
\]
We fix $t\in[0,T]$ and obtain from  (\ref{Resnhtau}) by triangle inequality
\begin{equation}
\begin{split}
 \Big(\mathbb{E}&[\int_{0}^{t} \|\text{Res}(\tau)\|_{L_4}^4 d\tau|(Z_k)_{k \in \mathbb{N}}] \Big)^{1/4}
  \\&=
 \Big( \mathbb{E}[\int _{0}^{t} \|\text{Res}(n(s)h+\tau(s))\|_{L_4}^4 ds|(Z_k)_{k \in \mathbb{N}}] \Big)^{1/4}
 \\&
  \leq
  \Big(\mathbb{E}[\int_{0}^{t}   \|\text{Res} (n(s)h)\|^4_{L^4} ds |(Z_k)_{k \in \mathbb{N}}] \Big)^{1/4}
  +  \Big(\int_{0}^{t}\tau(s)^4 \|Au_{n(s)}\|_{L^4}^4 ds  \Big)^{1/4}
  \\&
   + \Big(\int_{0}^{t}\frac{\tau(s)^4}{h^4}\|d_{n(s)+1}\|^4_{L^4}ds \Big)^{1/4}
  +  \Big(\int_{0}^{t}\|I(\tau(s))\|^4_{L^4}ds \Big)^{1/4}
  \\&
  + \Big(\mathbb{E}[\int_{0}^{t}\|Z_{n(s)}(\tau(s))\|^4_{L^4} ds |(Z_k)_{k \in \mathbb{N}}\Big)^{1/4}
  .
 \end{split}
\end{equation}
We now bound all the terms above separately.
Let $m(t)$ be the largest integer, such that $m(t)h \leq t$.
From Lemma \ref{lem:Resnh} again by triangle inequality, we get
\begin{equation*}
\begin{split}
\Big(\mathbb{E}&[\int_{0}^{t} \|\text{Res}(n(s)h)\|_{L_4}^4 ds |(Z_k)_{k \in \mathbb{N}}] \Big)^{1/4}
\\&  \leq  \Big(\int_{0}^{t} \|\text{Res}^{\text{dat}}_{n(s)}\|_{L_4}^4 ds  \Big)^{1/4}
+ \Big( \mathbb{E}\int_{0}^{t} \|\text{Res}^{\text{stoch}}_{n(s)}\|_{L_4}^4 ds  \Big)^{1/4}
\\&
\leq  \Big(h\sum_{n=1}^{m(t)} \|\text{Res}^{\text{dat}}_n\|_{L_4}^4  \Big)^{1/4}
+ \Big(h \sum_{n=1}^{m(t)} \mathbb{E}\|\text{Res}^{\text{stoch}}_n\|_{L_4}^4  \Big)^{1/4}
\\&
\leq
 \Big(h\sum_{n=1}^{m(t)} \|\text{Res}^{\text{dat}}_n\|_{L_4}^4  \Big)^{1/4}
+    \Big(\frac{3hm(t)}{8\pi}\Big)^{1/4} \Big( \sum_{k>N} \frac{\alpha_k^2}{k^2}  \Big)^{1/2}
\end{split}
\end{equation*}
where we used that $\text{Res}(0)=0$, so that all sums start at $n=1$.

For the next term
\[
\begin{split}
 \Big(\int_{0}^{t}\tau(s)^4 \|Au_{n(s)}\|_{L^4}^4 ds  \Big)^{1/4}
 \leq &
 \Big( \sum_{n=0}^{m(t)}\int_{nh}^{(n+1)h}\tau(s)^4 ds \|Au_{n}\|_{L^4}^4 ds  \Big)^{1/4}   \\
  \leq &
 \Big( \sum_{n=0}^{m(t)}\int_{0}^{h}s^4 ds \|Au_{n}\|_{L^4}^4 ds  \Big)^{1/4}   \\
 \leq &  \Big( \frac{h^5}{5}  \sum_{n=0}^{m(t)} \|Au_{n}\|_{L^4}^4 ds  \Big)^{1/4}
 \end{split}
\]
and similarly
\[\Big(\int_{0}^{t}\frac{\tau(s)^4}{h^4}\|d_{n(s)+1}\|^4_{L^4}ds \Big)^{1/4}
\leq \Big( \frac{h}{5}  \sum_{n=0}^{m(t)} \|d_{n+1}\|_{L^4}^4 ds  \Big)^{1/4} \;.
\]
For the integral-term $I$ by  \eqref{e:bouI}
\[ \begin{split}
\Big(\int_{0}^{t}\|I(\tau(s))\|^4_{L^4}ds \Big)^{1/4}
\leq \Big( h^2 \sum_{n=0}^{m(t)} \Big[
&
\frac12 \|u_n\|_{L^{12}}^3
 + \frac12 \|u_n\|_{L^{12}}^2\|d_{n+1}\|_{L^{12}}
 \\&
 + \frac14  \|u_n\|_{L^{12}}\|d_{n+1}\|_{L^{12}}^2
 + \frac1{20} \|d_{n+1}\|_{L^{12}}^3
 \Big]   \Big)^{1/4} \;.
    \end{split}
    \]
Finally for the Ornstein-Uhlenbeck bridge, we have from Lemma \ref{lem:boundN}
\[\Big(\mathbb{E}[\int_{0}^{t}\|Z_{n(s)}(\tau(s)) \|^4_{L^4} ds |(Z_k)_{k \in \mathbb{N} ] }\Big)^{1/4}
\leq  \Big( h \sum_{n=0}^{m(t)} \mathcal{S}_h(Z_{n+1}-e^{hA}Z_n) ) \Big)^{1/4}\;.
\]

Summarizing, we have the following bound:

 \begin{theorem}
 \label{thm:mainres}
 With numerical data $\varphi$ from Section \ref{sec:defphi} and the residual defined in \eqref{e:defRes}   we have
 for $t\in[mh,(m+1)h)$ and $m\in\{1,\ldots,M\}$ with $M=T/h$
 \[
 \mathbb{E}[\int_{0}^{t} \|\text{Res}(s)\|_{L_4}^4 ds|(Z_k)_{k \in \mathbb{N}}]
\leq \mathcal{K}_m^4
\]
with
\[
\begin{split}
\mathcal{K}_m =
&
\Big(h\sum_{n=1}^{m} \|\text{Res}^{\text{dat}}_n\|_{L_4}^4  \Big)^{1/4}
+    \Big(\frac{3mh}{8\pi}\Big)^{1/4} \Big( \sum_{k>N} \frac{\alpha_k^2}{k^2}  \Big)^{1/2}
\\&
+ \Big( \frac{h^5}{5}  \sum_{n=0}^{m} \|Au_{n}\|_{L^4}^4 ds  \Big)^{1/4}
 + \Big(h \sum_{n=0}^{m} \mathcal{S}_h(Z_{n+1}-e^{hA}Z_n) \Big)^{1/4}
\\&
+ \Big( h^2 \sum_{n=0}^{m} \Big[
\frac12 \|u_n\|_{L^{12}}^3
 + \frac12 \|u_n\|_{L^{12}}^2\|d_{n+1}\|_{L^{12}}
\\& \qquad\qquad\qquad
+ \frac14  \|u_n\|_{L^{12}}\|d_{n+1}\|_{L^{12}}^2
 + \frac1{20} \|d_{n+1}\|_{L^{12}}^3
 \Big]   \Big)^{1/4}\;.
\end{split}
\]
 \end{theorem}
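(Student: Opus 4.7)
The plan is to assemble the bound by combining the pointwise representation from Lemma \ref{lem:resint} with the estimates developed in Section \ref{sec:SB} and just before the theorem. Starting from
\[
\text{Res}(nh+\tau) = e^{A\tau}\text{Res}(nh) + (e^{A\tau}-I)u_n - \frac{\tau}{h}d_{n+1} + I(\tau) + \mathcal{Z}_n(\tau),
\]
I first derive a pointwise bound on $\|\text{Res}(nh+\tau)\|_{L^4}$ using the contraction estimate \eqref{e:RT} (which discards $e^{A\tau}$), the identity $e^{A\tau}-I=\int_0^\tau Ae^{As}\,ds$ (which yields $\|(e^{A\tau}-I)u_n\|_{L^4}\leq \tau\|Au_n\|_{L^4}$), and the polynomial bound \eqref{e:bouI} on $\|I(\tau)\|_{L^4}$, while leaving $\|\mathcal{Z}_n(\tau)\|_{L^4}$ unexpanded for the moment.

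Next I would raise this pointwise bound to the fourth power, integrate over $s\in[0,t]$ using the decomposition $s=n(s)h+\tau(s)$ so the time integral turns into $\sum_{n=0}^m\int_0^h d\tau$, take the conditional expectation given $(Z_k)_{k\in\mathbb{N}}$, and apply Minkowski's inequality in $L^4(ds\otimes d\mathbb{P}(\cdot|(Z_k)))$ to split the result into five separate contributions. Each contribution is then bounded using the preceding machinery: the residual-at-discretization-points piece splits via Lemma \ref{lem:Resnh} into a purely deterministic data part (which passes through the conditional expectation unchanged since it depends only on $(Z_k)$) and the independent stochastic tail $-Q_NZ(nh)$, whose $L^4$-moment is uniformly controlled by Lemma \ref{lemBouQ}, producing the first two summands of $\mathcal{K}_m$. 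The $\tau$-polynomial pieces from $(e^{A\tau}-I)u_n$ and $\tfrac{\tau}{h}d_{n+1}$ integrate trivially via $\int_0^h \tau^4\,d\tau = h^5/5$, giving the factor of $h^5/5$ on $\sum_n\|Au_n\|_{L^4}^4$. The bridge term $\mathcal{Z}_n(\tau)$ is controlled through the identity of Lemma \ref{lem:defN} combined with the pointwise estimate $\mathcal{N}(z)\leq h\,\mathcal{S}_h(z)$ of Lemma \ref{lem:boundN}, which after summation over $n$ yields the line $h\sum_n \mathcal{S}_h(Z_{n+1}-e^{hA}Z_n)$.

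The only real bookkeeping obstacle is the nonlinear integral $I(\tau)$: \eqref{e:bouI} bounds $\|I(\tau)\|_{L^4}$ by a four-term polynomial in $\tau$ whose coefficients are monomials in $\|u_n\|_{L^{12}}$ and $\|d_{n+1}\|_{L^{12}}$. To obtain the announced constants $\tfrac12,\tfrac12,\tfrac14,\tfrac1{20}$ one applies Minkowski's inequality to the four summands, integrates each against $\tau^{4j}/h^{4(j-1)}$ over $[0,h]$ for $j=1,\ldots,4$, and collects the resulting numerical factors. This step is direct but slightly tedious and demands care with the powers of $\tau$ and $h$. Once it is completed, one final application of Minkowski's inequality gathers the five pieces into the stated expression for $\mathcal{K}_m$, and no new analytic ingredient is required beyond what was developed in the preceding lemmas.
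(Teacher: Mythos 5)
Your proposal follows the paper's own proof essentially step for step: the same pointwise bound from Lemma \ref{lem:resint} via \eqref{e:RT} and $\|(e^{A\tau}-I)u_n\|_{L^4}\le\tau\|Au_n\|_{L^4}$, the same Minkowski split of the conditional $L^4(ds\otimes d\mathbb{P})$-norm into five contributions, the same use of Lemma \ref{lem:Resnh} together with Lemma \ref{lemBouQ} for the discretization-point piece (including the correct observation that $\mathrm{Res}^{\text{dat}}$ is $(Z_k)$-measurable while $Q_NZ(nh)$ is independent of the data), the same $\int_0^h\tau^4\,d\tau=h^5/5$ computation, and the same combination of Lemmas \ref{lem:defN} and \ref{lem:boundN} for the bridge term. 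On all of these points there is nothing to add.

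The one place where your sketch does not close is exactly the step you defer as ``tedious bookkeeping,'' namely the $I(\tau)$ term. The procedure you describe --- Minkowski in $L^4([0,h],d\tau)$ applied to the four monomials of \eqref{e:bouI}, then integrating $\tau^{4j}/h^{4(j-1)}$ --- produces
\[
\Big(\int_0^h\|I(\tau)\|_{L^4}^4\,d\tau\Big)^{1/4}
\le h^{5/4}\Big[5^{-1/4}\|u_n\|_{L^{12}}^3+\tfrac32\,9^{-1/4}\|u_n\|_{L^{12}}^2\|d_{n+1}\|_{L^{12}}
+13^{-1/4}\|u_n\|_{L^{12}}\|d_{n+1}\|_{L^{12}}^2+\tfrac14\,17^{-1/4}\|d_{n+1}\|_{L^{12}}^3\Big],
\]
which after the $\ell^4$-summation over $n$ has the cubes raised to the fourth power inside the sum and the prefactor $h^5/5$ etc.\ --- not the announced expression. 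The constants $\tfrac12,\tfrac12,\tfrac14,\tfrac1{20}$ together with the single factor $h^2$ in $\mathcal{K}_m$ arise instead from $\int_0^h\tau^j h^{-(j-1)}\,d\tau=h^2/(j+1)$, i.e.\ from integrating the \emph{first} power of the right-hand side of \eqref{e:bouI} over one time step; that is what the paper actually computes (and note the resulting bracket in $\mathcal{K}_m$ is homogeneous of degree $3$, not $12$, in the data, which is only consistent with a first-power integral). So your claimed route does not land on the stated $\mathcal{K}_m$: either you reproduce the paper's term by bounding $\int_0^h\|I(\tau)\|_{L^4}\,d\tau$ and then relating that to the fourth-power integral, or you carry out your Minkowski-in-$L^4$ computation honestly and obtain a differently shaped (and correctly homogeneous) summand. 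Asserting that the announced constants fall out of the computation you describe is the gap.
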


\begin{remark}
\label{rem:mainres}
The quantity $\mathcal{K}_m$ is almost numerically computable using numerical data.
Moreover, we can update the sums in the numerical computation, so that we do not need to calculate them in every step.

The only term that is not yet computable is
the sum depending on the $\alpha_k$ for $k>N$, but here one can easily give an upper bound, once the $\alpha_k$ are given, by
 \[
 \sum_{k>N} \frac{\alpha_k^2}{k^2}  \leq  \sup_{k>N}\{\alpha_k^2\} \int_N^\infty k^{-2} dk = \frac{1}N \sup_{k>N}\{\alpha_k^2\}\;.
 \]
\end{remark}

Let us also remark that due to the way we did the estimate, we cannot take the number $N$ of Fourier-modes arbitrarily large.
Due to the regularity of the solution $u$, which is not in $H^2$, we cannot expect $\|Au_n\|$ to be bounded for $N\to\infty$.
Thus we always need to take $h$ sufficiently small to balance that effect.

We expect that it is possible to give a precise estimate for the asymptotic limit $h\to 0$ and $N\to \infty$ of $h\|Au_n\|$,
but here we intend to calculate this explicitly, in order to obtain a better bound without any estimate.


 \section{Approximating the error}
\label{sec:App}

In this section, the arguments crucially depend on the properties of the equation especially on the nonlinear stability.
The numerical data only comes into play via the residual.
We need to quantify the continuous dependence of solutions on additive perturbations given by the residual.
Recall the mild solution of (\ref{SPDE})
\[
u(t) =e^{At}u_\star+\int_{0}^{t}e^{A(t-s)}F(u(s))ds+ Z(t),
\]
and the definition of the residual
\[
 \varphi(t) = e^{At} \varphi(0) + \int_{0}^{t}e^{A(t-s)}F(\varphi(s))ds + Z(t) + \text{Res}(t)\;.
\]
Therefore by putting $d(t)=u(t)-\varphi(t)$ we have
\[
d(t)=u(t)-\varphi(t) = e^{tA}d(0)
+\int_{0}^{t}e^{A(t-s)}(F(u(s)) - F(\varphi(s)))ds-\text{Res}(t)
\]
with $d(0)= u_\star-\varphi(0)=Q_Nu_\star $.

Substituting $r =d +\text{Res}$ we obtain
\[
r(t) = e^{At}d(0) + \int_{0}^{t}e^{A(t-s)} [F(r(s)+\varphi(s)-\text{Res}(s))-F(\varphi(s))]ds,
\]
which means $r$ is the solution of the following differential equation
\[
\partial_t r = Ar +F(r+\varphi-\text{Res})-F(\varphi).
\]
Recall that $\text{Res}(0)= 0$ so $r(0)=d(0)=Q_Nu_\star$.

Now we use standard a-priori estimates for the equation for $r$. This yields good estimates,
as both the linear part and the nonlinear part are stable, which simplifies the error estimate significantly.
\begin{equation}
\label{e:ap}
 \frac 12 \frac{\partial}{\partial t}\|r\|^2_{L_2}=\langle Ar,r \rangle-\langle(r+\varphi - \text{Res})^3 - \varphi^3,r\rangle.
\end{equation}
The following lemma is necessary to bound the cubic. It is not optimal, but sufficient for our purposes.
\begin{lemma}
	\label{lem:cubic}
	For all $r,R,\varphi\in\mathbb{R}$ we have
	\[
	[-(r+R+\varphi)^3+\varphi^3]r \leq R^4+3R^2\varphi^2
	\]
\end{lemma}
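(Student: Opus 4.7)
The plan is to exploit the monotonicity of $x\mapsto -x^3$ via the identity $-u^3+v^3=-(u-v)(u^2+uv+v^2)$, in which the second factor $Q:=u^2+uv+v^2=(u+v/2)^2+3v^2/4\ge 0$ is always non-negative. Setting $u=r+R+\varphi$ and $v=\varphi$ gives $u-v=r+R$, and the left-hand side of the inequality becomes
\[
[-(r+R+\varphi)^3+\varphi^3]\,r \;=\; -r(r+R)\,Q .
\]
This reduces the task to estimating the purely scalar expression $-r(r+R)Q$, and I would proceed by a case analysis on the sign of $r(r+R)$.

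If $r(r+R)\ge 0$, then the left-hand side is non-positive while $R^4+3R^2\varphi^2\ge 0$, so there is nothing to prove. In the remaining case $r(r+R)<0$, the numbers $r$ and $r+R$ have opposite signs, and this is the key observation: the distance between them equals the sum of their absolute values, giving
\[
|r|+|r+R|\;=\;|r-(r+R)|\;=\;|R|.
\]
From this, AM--GM immediately yields the sharp bounds
\[
|r(r+R)|\le \Bigl(\tfrac{|r|+|r+R|}{2}\Bigr)^2=\tfrac{R^2}{4}, \qquad |r+R|\le |R|,
\]
so the cubic factor in front of $Q$ is already controlled purely in terms of $R$.

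It remains to bound $Q$ by a polynomial in $R$ and $\varphi$. Using $|uv|\le (u^2+v^2)/2$ I obtain $Q\le \tfrac{3}{2}(u^2+v^2)=\tfrac{3}{2}[(r+R+\varphi)^2+\varphi^2]$, and the elementary estimate $(r+R+\varphi)^2\le 2(r+R)^2+2\varphi^2\le 2R^2+2\varphi^2$ then gives
\[
Q\;\le\;\tfrac{3}{2}\bigl(2R^2+3\varphi^2\bigr)\;=\;3R^2+\tfrac{9}{2}\varphi^2 .
\]
Multiplying by $|r(r+R)|\le R^2/4$ yields $|r(r+R)|\,Q\le \tfrac{3}{4}R^4+\tfrac{9}{8}R^2\varphi^2$, which is comfortably dominated by the claimed $R^4+3R^2\varphi^2$.

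The only non-routine point is spotting the identity $|r|+|r+R|=|R|$ in the hard case; once that is in hand, everything else is two applications of AM--GM with considerable slack. Correspondingly the stated right-hand side is not sharp but is chosen generously so that this short case-split route closes without any delicate optimisation.
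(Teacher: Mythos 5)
Your proof is correct, and while its skeleton coincides with the paper's, the hard case is handled by a genuinely different (and cleaner) argument. Both proofs first exhibit the factor $r+R$ --- you via the algebraic identity $-u^3+v^3=-(u-v)(u^2+uv+v^2)$ with non-negative second factor, the paper via the integral representation $-3r\int_0^{r+R}(\zeta+\varphi)^2\,d\zeta$ --- and both then split on whether $r$ and $r+R$ share a sign, disposing of the same-sign case for free. The divergence is in the opposite-sign case: the paper only records the cruder consequence $|r|\le|R|$, expands the cube in powers of $r$ and $R+\varphi$, and needs two completions of squares with carefully tuned Young constants to absorb $-r^4-3r^3(R+\varphi)-3r^2(R+\varphi)^2$ before estimating the remaining terms. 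You instead keep the factored form $-r(r+R)Q$, exploit the sharper identity $|r|+|r+R|=|R|$ to get $|r(r+R)|\le R^2/4$ together with $|r+R|\le|R|$, and then bound $Q$ by a crude but sufficient quadratic in $R$ and $\varphi$. This avoids the delicate square-completions entirely and in fact lands on the slightly stronger bound $\tfrac34R^4+\tfrac98R^2\varphi^2$, whereas the paper's computation arrives exactly at $R^4+3R^2\varphi^2$; either constant is adequate for the way the lemma is used in the a-priori estimate of Section \ref{sec:App}.
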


\begin{proof}
	First
	\[
	[-(r+R+\varphi)^3+\varphi^3]r = - 3 r \int_{\varphi}^{r+R+\varphi} \zeta^2 d\zeta = - 3 r \int_{0}^{r+R} (\zeta+\varphi)^2 d\zeta \;.
	\]
	Thus the term is non-positive if $r$ and $r+R$ have the same sign (i.e., in the case $r,r+R\in[0,\infty)$ or $r,r+R\in(-\infty,0]$).
	
	In the remaining two cases we have $|R|\leq|r|$, as for $r \leq 0\leq R+r$ we have $R\geq -r \geq 0$ and for  $r+R \leq 0\leq r$ we have $0\leq r \leq -R$.
	Thus we obtain using $ab\leq a^2+\frac14 b^2$
	\[
	\begin{split}
	[-(r+R+\varphi)^3+\varphi^3]r =& -r^4-3r^3(R+\varphi)-3r^2(R+\varphi)^2-r[(R+\varphi)^3-\varphi^3]\\
	&\leq -\frac34  r^2(R+\varphi)^2-r[(R+\varphi)^3-\varphi^3]\\
	&= -\frac34  r^2(R+\varphi)^2 - rR^3 -3rR\varphi(R+\varphi)\\
	&\leq - rR^3 +3 R^2\varphi^2\\
	& \leq  R^4 +3 R^2\varphi^2 \;.
	\end{split}
	\]
\end{proof}

Now by Lemma \ref{lem:cubic} we obtain from \eqref{e:ap}

\[
\frac{\partial}{\partial t} \|r\|^2_{L_2}
\leq 2\|\text{Res}\|_{L^4}^4 + 6 \|\text{Res}\|_{L^4}^2 \|\varphi\|_{L^4}^2,
\]
and integration yields
\[
\|r(t) \|^2_{L_2}
\leq \|r(0)\|_{L^2}^2 + 2 \int_0^t\|\text{Res}\|_{L^4}^4 ds + 6\Big(\int_0^t \|\text{Res}\|_{L^4}^4 ds \Big)^{1/2}
\Big(\int_0^t  \|\varphi\|_{L^4}^4 ds \Big)^{1/2}
\]
From Theorem \ref{thm:mainres}  we can get the following bound for the error (by using Cauchy-Schwarz) in case $t\in[mh,(m+1)h]$
\[
\mathbb{E}[\|r(t) \|^2_{L_2} |(Z_k)_{k\in\mathbb{N}}]  \leq \|Q_N u_\star\|^2 + 2 (\mathcal{K}_m)^4
+ 6(\mathcal{K}_m)^2  \Big(\int_0^t \|\varphi\|_{L^4}^4 ds \Big)^{1/2}
\]
In order to have a fully numerically computable quantity, we need to take care of the integral. 
We proceed similarly to $I(\tau)$ and use the equality
$ \varphi(t) = u_{n(t)} + \tau(t) h^{-1} d_{n(s)+1}$ to obtain:
\[
\begin{split}
 \int_0^t \|\varphi\|_{L^4}^4 ds
 \leq &   \int_0^t \| u_{n(s)} + \tau(s)h^{-1} d_{n(s)+1}\|_{L^4}^4 ds \\
  = &  \sum_{n=0}^{m(t)} \int_0^h \| u_{n(s)} + s h^{-1} d_{n(s)+1}\|_{L^4}^4 ds \\
 \leq &  h \sum_{n=0}^{m(t)} \Big[  \| u_{n}\|_{L^4}^4
 + 2\| u_{n}\|_{L^4}^3 \|d_{n+1}\|_{L^4}
 + 2\| u_{n}\|_{L^4}^2 \|d_{n+1}\|_{L^4}
 \\ & \qquad\qquad
  +  \| u_{n}\|_{L^4}^3 \|d_{n+1}\|^3_{L^4}+ \frac15\|d_{n+1}\|^4_{L^4}  \Big] \\
\end{split}
\]

\begin{theorem}\label{thm:main}
Let $u$ be a mild solution of \eqref{e:mild} with initial condition $u_\star$,
$\varphi$ the numerical approximation from Section \ref{sec:defphi},
and $\text{Res}$ the numerical approximation from \eqref{e:defRes}.
For $t\in[mh,(m+1)h)$ and $m\in\{1,\ldots,M\}$ with $M=T/h$ we have for the error $r=u-\varphi+\mathrm{Res}$ that
\[
\mathbb{E} \left[\|r(t) \|^2_{L_2}| (Z_k)_{k\in\mathbb{N}}\right]
 \leq \|Q_N u_\star\|^2 + 2 (\mathcal{K}_m)^4
+ 6 (\mathcal{K}_m)^2 ( \mathcal{I}_m )^{1/2}
\]
where the bound on the residual $\mathcal{K}_m$ was defined in Theorem \ref{thm:mainres} and
\[
\begin{split}
\mathcal{I}_m =
 h \sum_{n=0}^{m} \Big[  \| u_{n}\|_{L^4}^4
 + 2\| u_{n}\|_{L^4}^3 \|d_{n+1}\|_{L^4}
 &+ 2\| u_{n}\|_{L^4}^2 \|d_{n+1}\|_{L^4}
 \\ &
  +  \| u_{n}\|_{L^4}^3 \|d_{n+1}\|^3_{L^4}+ \frac15\|d_{n+1}\|^4_{L^4}  \Big]
\end{split}
\].
\end{theorem}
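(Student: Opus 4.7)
The plan is to compare the mild formulation of the exact solution $u$ with the ``perturbed'' mild equation for the numerical interpolant $\varphi$, where the perturbation is precisely the residual $\text{Res}$. Setting $d = u - \varphi$ and then $r = d + \text{Res}$ eliminates the semigroup-applied-to-residual terms and yields the clean evolution
\[
\partial_t r = A r + F(r + \varphi - \text{Res}) - F(\varphi), \qquad r(0) = u_\star - \varphi(0) = Q_N u_\star,
\]
where I use that $\text{Res}(0) = 0$ and $\varphi(0) = P_N u_\star$.

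From here I would run a standard $L^2$ energy estimate: testing against $r$ gives $\tfrac12 \partial_t \|r\|_{L^2}^2 = \langle Ar, r\rangle + \langle F(r+\varphi-\text{Res}) - F(\varphi), r\rangle$. The linear term is non-positive by the Dirichlet-Laplacian structure and can simply be dropped. Applying Lemma \ref{lem:cubic} pointwise with $R = -\text{Res}$ bounds the cubic term by $\text{Res}^4 + 3\,\text{Res}^2 \varphi^2$; integrating in space and applying Cauchy--Schwarz to the cross term $\int \text{Res}^2 \varphi^2 \, dx \le \|\text{Res}\|_{L^4}^2 \|\varphi\|_{L^4}^2$ yields
\[
\partial_t \|r\|_{L^2}^2 \le 2 \|\text{Res}\|_{L^4}^4 + 6 \|\text{Res}\|_{L^4}^2 \|\varphi\|_{L^4}^2.
\]

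Next I would integrate in time from $0$ to $t$, apply Cauchy--Schwarz in $ds$ to the cross term, and take the conditional expectation given $(Z_k)_{k \in \mathbb N}$. Because $\varphi$ is measurable with respect to this $\sigma$-algebra, the $\varphi$-integral pulls out; and because $x \mapsto \sqrt{x}$ is concave, the conditional version of Jensen lets me exchange $\mathbb{E}[\sqrt{\cdot}|\cdot] \le \sqrt{\mathbb{E}[\cdot|\cdot]}$ on the residual factor. Invoking Theorem \ref{thm:mainres} then replaces $\mathbb{E}[\int_0^t \|\text{Res}\|_{L^4}^4 ds\,|\,(Z_k)]$ by $\mathcal{K}_m^4$, giving
\[
\mathbb{E}\big[\|r(t)\|_{L^2}^2 \,\big|\,(Z_k)_{k\in\mathbb{N}}\big] \le \|Q_N u_\star\|^2 + 2 \mathcal{K}_m^4 + 6 \mathcal{K}_m^2 \Big(\int_0^t \|\varphi\|_{L^4}^4 ds\Big)^{1/2}.
\]

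The remaining task is to express $\int_0^t \|\varphi\|_{L^4}^4 ds$ using only the numerical data $\{u_n\}$ and the increments $d_{n+1} = u_{n+1} - u_n$. On each subinterval $[nh, (n+1)h]$ the linear interpolant satisfies $\varphi(nh + \tau) = u_n + (\tau/h) d_{n+1}$, so a triangle inequality in $L^4$ and a binomial expansion of the fourth power reduce the integral to $\int_0^h$ of polynomial weights $1, \tau/h, (\tau/h)^2, (\tau/h)^3, (\tau/h)^4$, which evaluate to $h, h/2, h/3, h/4, h/5$ and combine with the binomial coefficients $1,4,6,4,1$ to produce the coefficients $h, 2h, 2h, h, h/5$ appearing in $\mathcal{I}_m$. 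Summing over $n = 0, \dots, m(t)$ then yields $\int_0^t \|\varphi\|_{L^4}^4 ds \le \mathcal{I}_m$, which closes the bound.

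I do not foresee a serious obstacle here: the nonlinear stability has already been pre-packaged into Lemma \ref{lem:cubic}, the noise cancels between $u$ and $\varphi$ (it is entirely absorbed into the residual), and the only subtlety is the measurability argument that lets $\varphi$ come out of the conditional expectation while Jensen controls the residual factor.
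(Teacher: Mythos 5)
Your proposal is correct and follows essentially the same route as the paper: the substitution $r=d+\mathrm{Res}$, the $L^2$ energy estimate with $\langle Ar,r\rangle\le 0$ dropped, Lemma \ref{lem:cubic} applied with $R=-\mathrm{Res}$, Cauchy--Schwarz in space and time, conditional Jensen on the residual factor together with Theorem \ref{thm:mainres}, and the binomial expansion of the interpolant for $\int_0^t\|\varphi\|_{L^4}^4\,ds$. Your explicit binomial computation (powers $\|u_n\|^{4-j}\|d_{n+1}\|^{j}$ with coefficients $1,2,2,1,\tfrac15$) is in fact the correct form of $\mathcal{I}_m$; the exponents displayed in the theorem statement appear to contain typographical slips.
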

As we expect $\mathcal{K}_m$ to be small, and the solution of the numerical scheme not,
the third term should dominate in the error estimate. This is also confirmed in the numerical example.

Note that we usually neglect the error term coming from the initial condition by assuming that $Q_Nu_\star=0$.
Anyway this can be made as small as we wish, by assuming that the initial condition is sufficiently smooth and choosing $N$ large.

Let us also remark that $r$ is not the error $d=u-\varphi$ we are interested in, but as we expect $\text{Res}$ to be small,
we neglect this in the discussion of the numerical examples later.

\section{Extensions of the Result}
\label{sec:out}

In this section we discuss a few possible generalizations of our result.
As the precise bounds and constants (especially in the approximation result)
depend heavily on the structure of the given equation,
we presented only the Allen-Cahn equation as an example.
Although may methods of the  proof (especially the results for the residual)
should be straightforwardly adapted to other equations.

\noindent\textbf{Stable polynomial nonlinearities:} These should be easy to treat with our results.
The estimate for the residual only contains more terms, if the nonlinearity is of higher order.
But due to the approximation result we have to change all the estimates to $L^{p+1}$ and thus to $L^{(p+1)p}$
if the nonlinearity is a polynomial of  odd degree $p$,
instead of $L^4$ and $L^{12}$ used for Allen-Cahn.

\noindent\textbf{Globally Lipschitz nonlinearities:} It is possible to adapt our result to this case,
but it is not useful, as the analytic error estimate for the numerical approximation that are already available
are quite good.

\noindent\textbf{General differential operators:}  If they are diagonal w.r.t.\ the Fourier basis, then all estimates needed should be
similar. But for general operators none of our estimates for the stochastic convolution and the Bridge process
in between discretization points apply directly. These need to be rewritten in such a case.

\noindent\textbf{General noise:} This possible to treat for additive noise, but it should be more complicated.
Various constants in our estimates are not that easy to compute explicitly for general noise. Moreover,
the generation of $P_NZ(hk)$ is significantly more involved in the numerical scheme if the covariance of the noise is not diagonal in Fourier space.
Thus we restrict ourself in the examples to space-time white noise.

Let us finally remark, that the analysis depends crucially on the spectral Galerkin approximation, that simplifies all estimates a lot.
For other numerical methods like finite element methods, the results for the residual has to be rewritten completely.
Nevertheless the approximation result in the end does not depend on the numerical method but only on the structure of the equation.

\section{Numerical Experiments}
\label{sec:Num}

For the numerical result we focus on space-time white noise of strength $1$, which means that  all $\alpha_k=1$. Moreover,
as both the linear part and the nonlinearity are stable, we expect the solution to be of order $1$ with rare events,
where the solution is significantly larger.
Nevertheless, we expect solutions to be quite rough.

Due to poor regularity properties, we do not expect the numerical approximation to be very accurate,
but still we first tried a relatively poor discretization with $N=128$ Fourier-modes and time-steps $h=10^{-4}$
with a terminal time $T=1$. As expected this did not work that well, 
and the error is only a little bit smaller than the solution,
see Figure \ref{fig:new}. Thus we used in our example
\[
N=256\quad\text{and}\quad h=10^{-6}\;.
\]
To simplify the example a little bit, we consider the initial condition $u_0=u_\star = \sin(x) $
so that the projection to the high modes $Q_Nu_\star=0$ vanishes,
and we can neglect all error terms arising from the initial condition.

First in Figure \ref{fig:KM+Error} we plotted the residual $\mathcal{K}_m^4$ for $m=1,\cdots,1/h$ together with the final error.
As expected $\mathcal{K}_m^4$ is small and the error term from Theorem \ref{thm:main}
is bounded by the error term involving $\mathcal{K}_m^2$ and the numerical data.

In Figure \ref{fig:comparison} we plot two terms of the residual $\mathcal{K}_m^4$ , for $m=1,\cdots,1/h$.
One of the main terms in $\mathcal{K}_m^4$ which depends on the numerical data is $\text{Res}_n^{dat}$,
therefore we plot $h\sum_{n=1}^{m}\|\text{Res}_n^{dat}\|_{L_4}^4 $ in Figure \ref{fig:Residual}
to see impact of these terms on the residual-bound $\mathcal{K}_m^4$, which seems to be negligible.

\begin{figure}[ht]
       \subfigure[$\mathcal{K}_m^4$]
    {
        \includegraphics[width=2.7in]{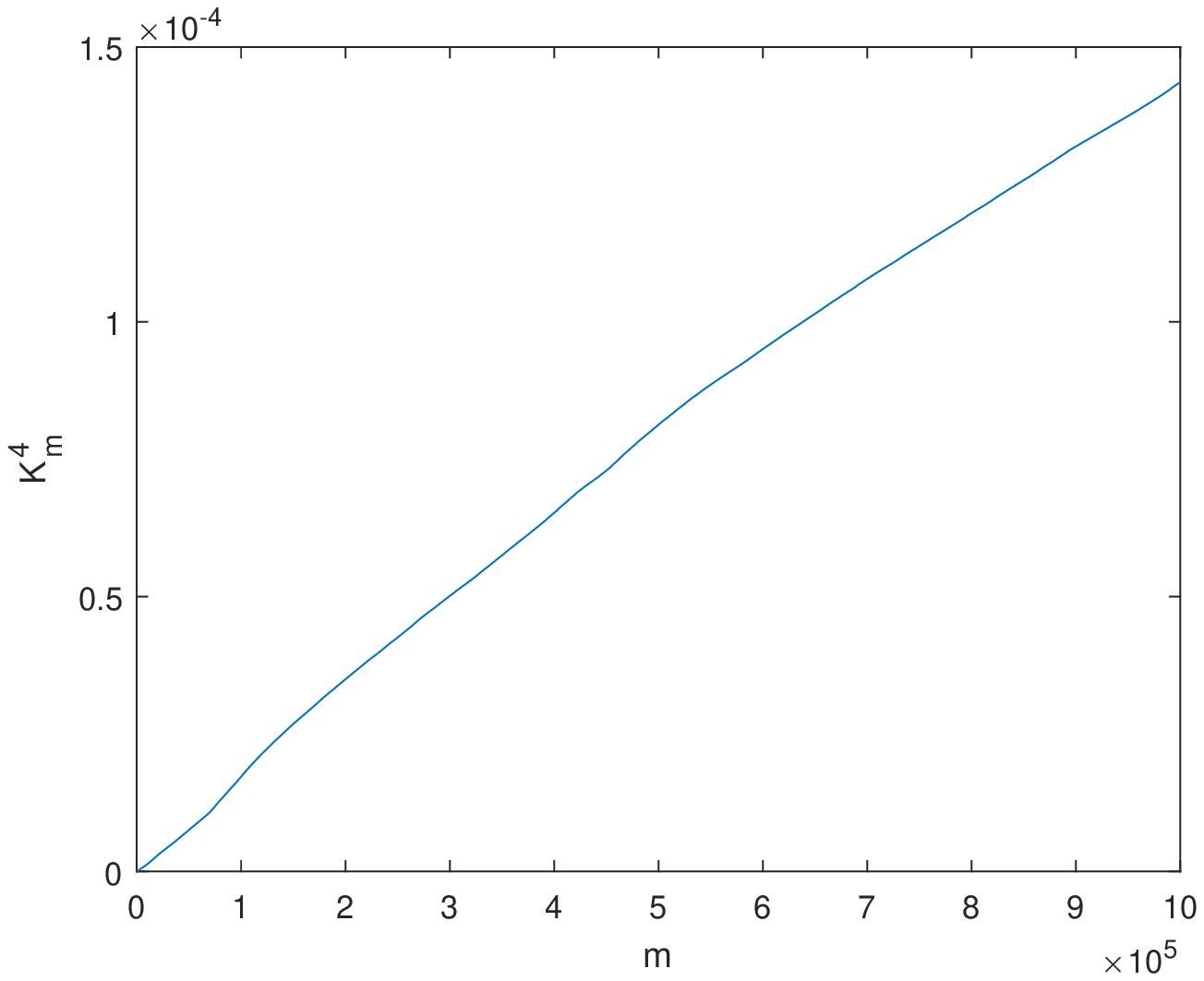}
        \label{fig:Km}
    }
     \subfigure[$\mathbb{E} \Big(\|r(t)\|^2_{L_2} | (Z_k)_{k\in\mathbb{N}}\Big)$]
    {
        \includegraphics[width=2.7in]{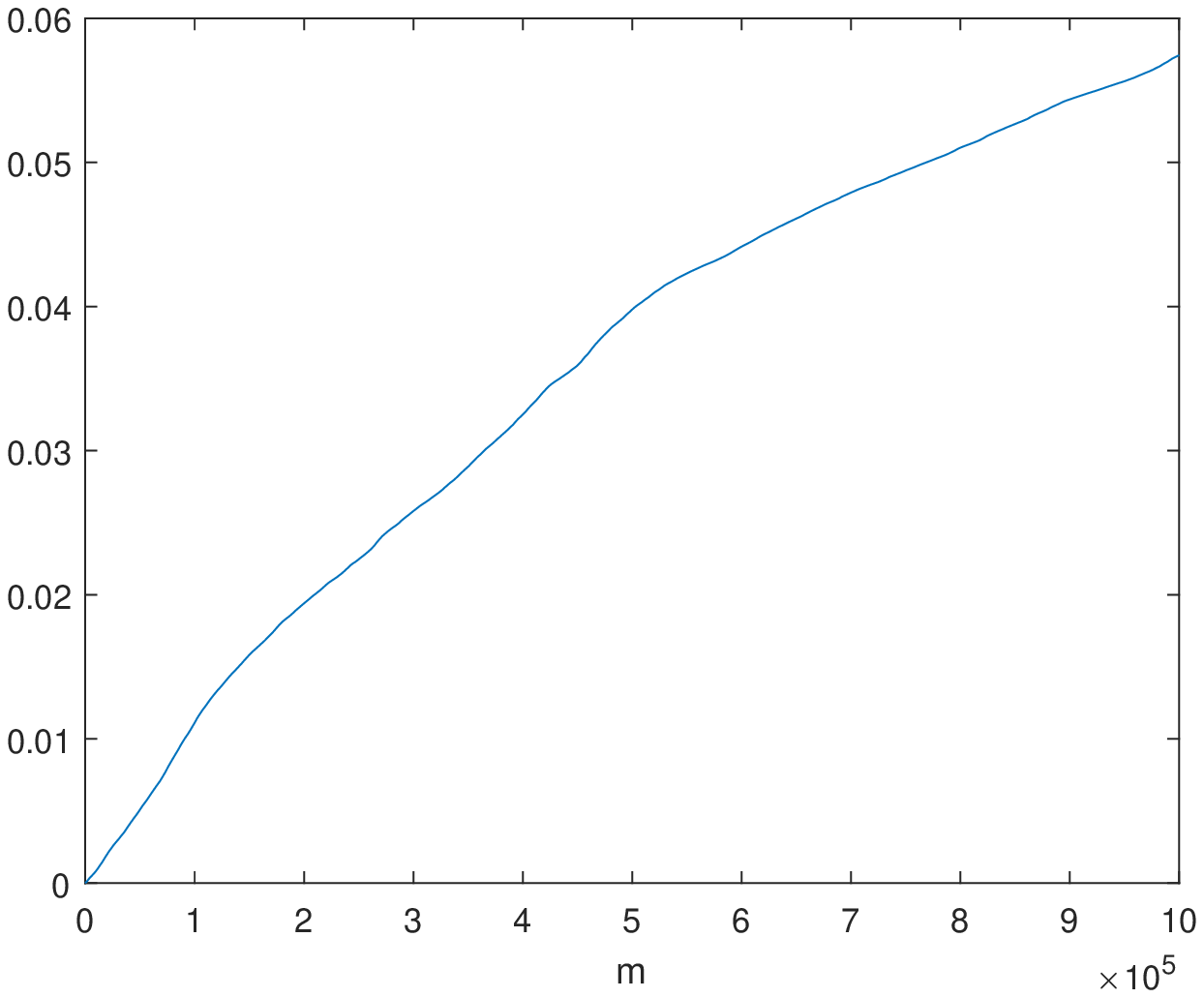}
        \label{fig:FinalError}
    }
    \caption{Comparison of the bound $\mathcal{K}_m^4$ on the residual and the final error bound from  Theorem \ref{thm:main} with $N=256, h=10^{-6}$. Obviously, the $\mathcal{K}_m^4$ is not relevant in that estimate.}
    \label{fig:KM+Error}
\end{figure}

Moreover in Figure \ref{fig:OU-Bridge} we plot $h \sum_{n=0}^{m} \mathcal{S}_h(Z_{n+1}-e^{hA}Z_n)$,
i.e, the term in $\mathcal{K}_m^4$ which arises from the OU-bridge.
By comparing Figure \ref{fig:Km} and \ref{fig:OU-Bridge} we can see the impact of the OU-bridge on $\mathcal{K}_m^4$.
This gives a substantial, but not the most dominant term in $ \mathcal{K}_m^4$.
We can also see that this error term is almost growing linear.
The reason for this is that the part in $\mathcal{S}_h$ that depends on the numerical data $Z_n$ is quite small
and the deterministic part of the estimate dominates, which bounds the fluctuations of the OU-bridge
between the data points.

\begin{figure}[ht]
       \subfigure[$h \sum_{n=0}^{m} \mathcal{S}_h(Z_{n+1}-e^{hA}Z_n)$]
    {
        \includegraphics[width=2.7in]{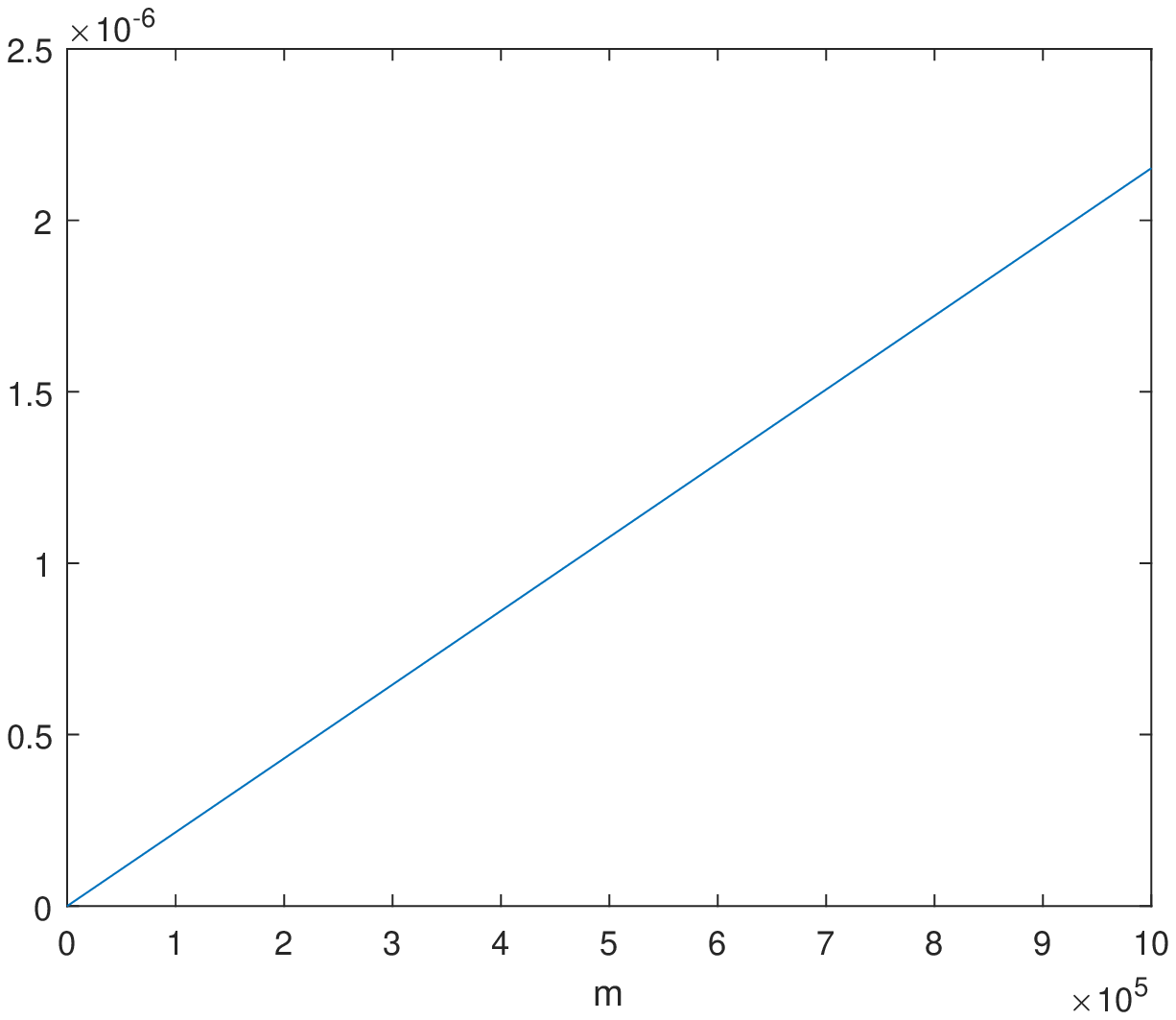}
        \label{fig:OU-Bridge}
    }
        \subfigure[$h\sum_{n=1}^{m} \|\text{Res}^{\text{dat}}_n\|_{L_4}^4$]
    {
        \includegraphics[width=2.7in]{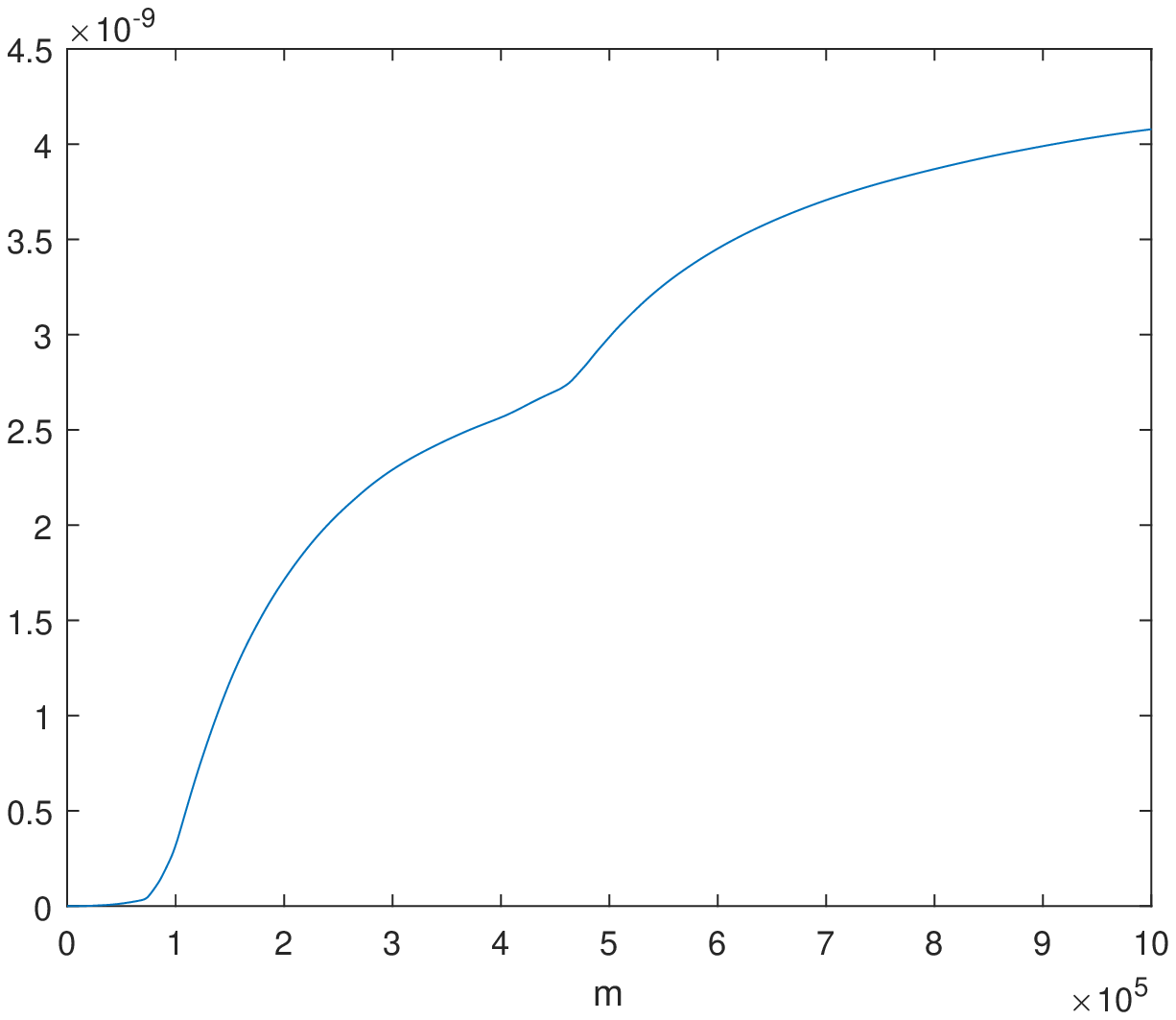}
        \label{fig:Residual}
    }
    \caption{Values of $h \sum_{n=0}^{m} \mathcal{S}_h(Z_{n+1}-e^{hA}Z_n)$ which bounds the OU-bridge. This gives a substantial, but not the most dominant term in $ \mathcal{K}_m^4$.
     The data dependent terms $h\sum_{n=1}^{m} \|\text{Res}^{\text{dat}}_n\|_{L_4}^4$  at the discrete time-points in the residual are negligible.
    Occasionally these terms become suddenly larger, at points where we have a stronger increase in the error.}
    \label{fig:comparison}
\end{figure}

The final bound for the error $\mathbb{E} \left[\|r(mh) \|^2_{L_2}| (Z_k)_{k\in\mathbb{N}}\right]$ which is stated in Theorem \ref{thm:main}  is plotted in Figure \ref{fig:Average error}
for $10$ simulations. It confirms that the numerical approximation with $N=256$ and $h=10^{-6}$ works well,
in contrast to the case $N=128$ and $h=10^{-4}$. See Figure \ref{fig:new}.

We also see in Figure \ref{fig:Average error} and even better in Figure \ref{fig:new}
that the error is not growing with constant speed,
but it has parts where it grows much faster. This effect is also very well visible in Figure \ref{fig:Residual},
although the effect there is too small to have an impact on $\mathcal{K}_m$.
We conjecture that this might be a large deviation effect, that actually might not be that rare due to noise strength of order one.

Let us also point out that we do not expect to have a mean-square error bound without conditioning on the numerical data.
Thus both in Figure  \ref{fig:Average error}
and \ref{fig:new} we expected a quite large variation for different realizations of the numerical approximation.

To see exactly the impact of each term in  $\mathcal{K}_m$ we plotted its value in Figure \ref{fig:Km Terms}.
Also in Table \ref{t1} values of each term at the final time $T=1$ is stated for $4$ simulations.

\begin{figure}[ht]
\begin{center}
  \includegraphics[width=4in]{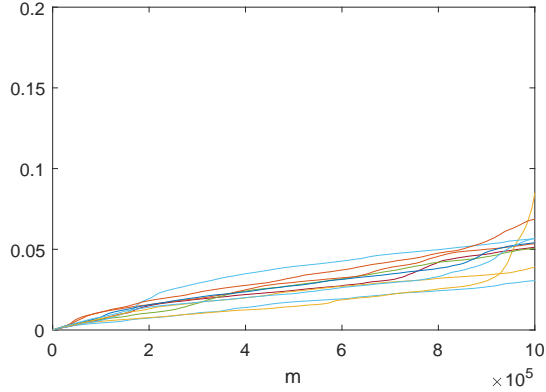}
\end{center}
            \caption{$10$ simulations of our bound for $\mathbb{E} \Big(\|r(t)\|^2_{L_2} | (Z_k)_{k\in\mathbb{N}}\Big)$ for $N=256$ and $h=10^{-6}$.}
                \label{fig:Average error}
 \end{figure}

\begin{figure}[ht]
\begin{center}
  \includegraphics[width=4in]{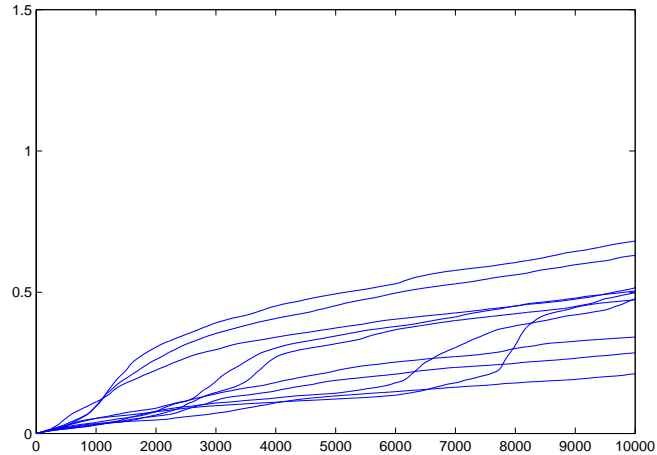}
\end{center}
            \caption{$10$ simulations of our bound for $\mathbb{E} \Big(\|r(t)\|^2_{L_2} | (Z_k)_{k\in\mathbb{N}}\Big)$ for $N=128$ and $h=10^{-4}$.
            Only for small times the error estimate seems reasonable. Moreover, the data has quite a variance.}
                \label{fig:new}
 \end{figure}
 \begin{figure}[ht]
\begin{center}
  \includegraphics[width=5in]{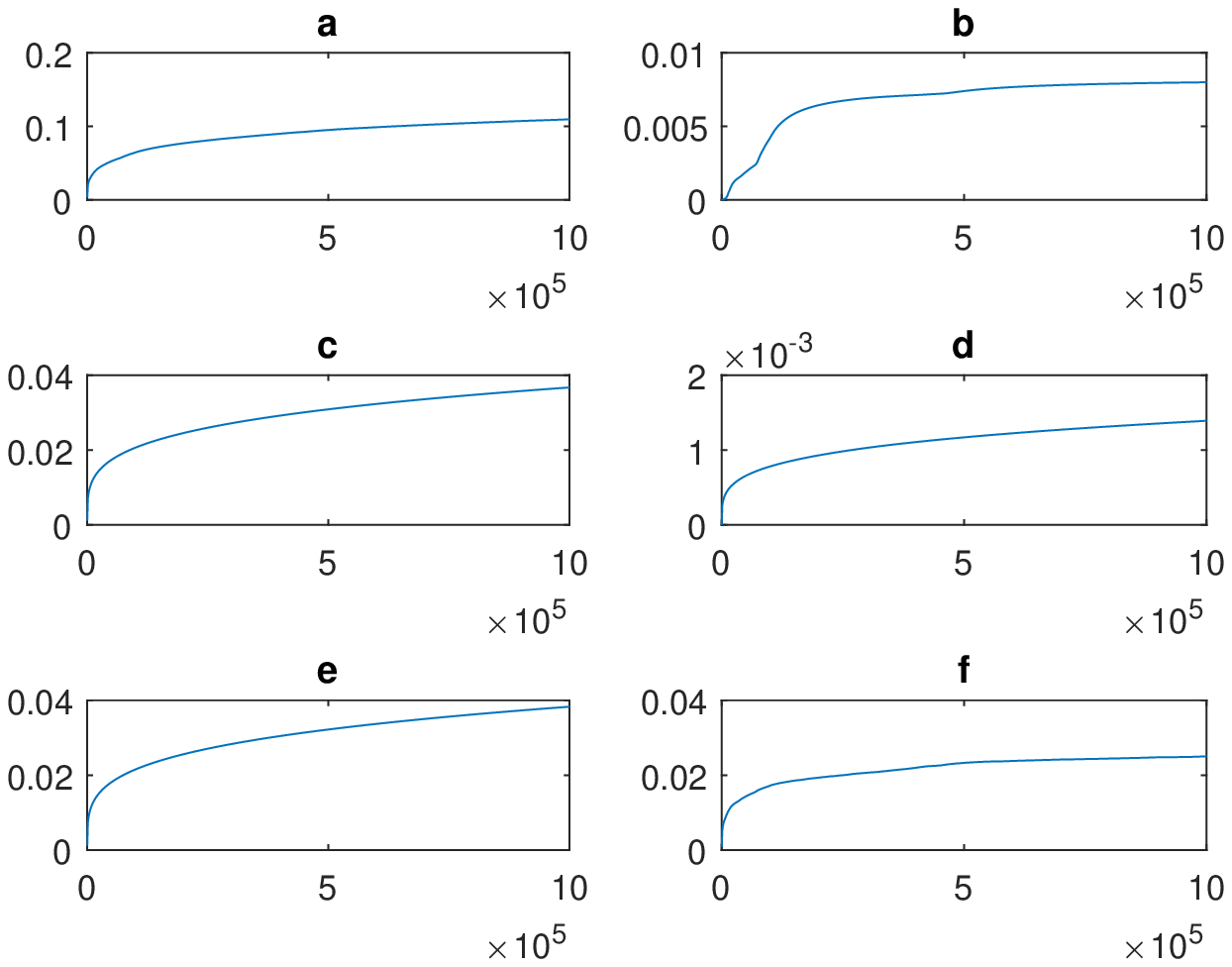}
\end{center}
                 \caption{Values of the error terms for one realization  
                 $(a)\ \mathcal{K}_m$,  
                 $(b)\ \Big(h\sum_{n=1}^{m} \|\text{Res}^{\text{dat}}_n\|_{L_4}^4  \Big)^{1/4},$  
                 $(c)\ \Big(\frac{3mh}{8\pi}\Big)^{1/4} \Big( \sum_{k>N} \frac{\alpha_k^2}{k^2}\Big)^{1/2},$ 
                 $(d)\ \Big( \frac{h^5}{5}  \sum_{n=0}^{m} \|Au_{n}\|_{L^4}^4   \Big)^{1/4} $,
                 $(e)\ \Big( h^2 \sum_{n=0}^{m} \Big[
\frac12 \|u_n\|_{L^{12}}^3
 + \cdots+ \frac1{20} \|d_{n+1}\|_{L^{12}}^3\Big]\Big)^{1/4}$,
                    $(f)\  \Big(h \sum_{n=0}^{m} \mathcal{S}_h(Z_{n+1}-e^{hA}Z_n)\Big)^{1/4}$
           }
                  \label{fig:Km Terms}
 \end{figure}


\begin{table}[!htp]\label{}
\center
\begin{tabular}{ | c | c | c | c | c | }
\hline $\mathcal{K}_m$ &$ 0.1027$ & $ 0.0996$ & $0.1055$ & $0.1085$  \\
\hline $E_1$ &$0.0030$ & $0.0012$ & $0.0038$ & $0.0076$  \\
\hline $E_2$ &$  0.0367$ & $0.0367$ & $0.0367$ & $0.0367$  \\
\hline $E_3$  &$0.0014$ & $0.00141$ & $0.0014$ & $0.00141$  \\
\hline $E_4$ &$0.0233$ & $0.0220$ & $ 0.0253$ & $0.0246$  \\
\hline $E_5$ &$0.03831$ & $0.0383$ & $0.03830$ & $0.0383$  \\
\hline
\end{tabular}
\protect\caption{Values of four simulations of $\mathcal{K}_m$ and each of it's term at the final time $T=1$, i.e, $m=10^6$.
The contribution of error at discretization points is $ E_1=\Big(h\sum_{n=1}^{m} \|\text{Res}^{\text{dat}}_n\|_{L_4}^4  \Big)^{1/4}$
for the part determined by the data
and  $E_2 = \Big(\frac{3mh}{8\pi}\Big)^{1/4} \Big( \sum_{k>N} \frac{\alpha_k^2}{k^2}  \Big)^{1/2}$ for the stochastics on the high modes.
The next two error terms $ E_3=\Big( \frac{h^5}{5}  \sum_{n=0}^{m} \|Au_{n}\|_{L^4}^4 ds  \Big)^{1/4} $ and $ E_4=\Big( h^2 \sum_{n=0}^{m} \Big[
\frac12 \|u_n\|_{L^{12}}^3
 + \cdots+ \frac1{20} \|d_{n+1}\|_{L^{12}}^3\Big]   \Big)^{1/4}$ are data controlled terms for error that arises between discretization points.
Finally  $ E_5=\Big(h \sum_{n=0}^{m} \mathcal{S}_h(Z_{n+1}-e^{hA}Z_n)\Big)^{1/4}$ bounding the stochastic fluctuation
in between discretization points is large, but not that large as we would expected it to be.
}\label{t1}
\end{table}

\end{document}